\DeclareMathAlphabet{\mathcalligra}{T1}{calligra}{m}{n}
\DeclareMathOperator{\id}{id}
\DeclareMathOperator{\cohdim}{coh.dim}
\DeclareMathOperator{\End}{End}
\DeclareMathOperator{\Ad}{Ad}
\DeclareMathOperator{\ad}{ad}
\DeclareMathOperator{\GL}{GL}
\DeclareMathOperator{\SL}{SL}
\DeclareMathOperator{\stab}{Stab}
\newtheorem{thm}{Theorem}[section]
\newtheorem{prop}[thm]{Proposition}
\newtheorem{cor}[thm]{Corollary}
\theoremstyle{definition}
\newcommand{\ve}{\ensuremath{\mathbf{e}}\xspace}
\newcommand{\vs}{\ensuremath{\mathbf{s}}\xspace}
\newcommand{\vt}{\ensuremath{\mathbf{t}}\xspace}
\newcommand{\sC}{{\mathscr C}}
\newcommand{\sF}{{\mathscr F}}
\newcommand{\sE}{{\mathscr E}}
\newcommand{\sH}{{\mathscr H}}
\newcommand{\sL}{{\mathscr L}}
\newcommand{\sQ}{{\mathscr Q}}
\newcommand{\mA}{{\mathcal A}}
\newcommand{\mC}{{\mathcal C}}
\newcommand{\mD}{{\mathcal D}}
\newcommand{\mE}{{\mathcal E}}
\newcommand{\mG}{{\mathcal G}}
\newcommand{\mM}{{\mathcal M}}
\newcommand{\mP}{{\mathcal P}}
\newcommand{\mR}{{\mathcal R}}
\newcommand{\mS}{{\mathcal S}}
\newcommand{\mT}{{\mathcal T}}
\newcommand{\mU}{{\mathcal U}}
\newcommand{\mX}{{\mathcal X}}
\newcommand{\bA}{{\mathbb A}}
\newcommand{\bC}{{\mathbb C}}
\newcommand{\bF}{{\mathbb F}}
\newcommand{\bK}{{\mathbb K}}
\newcommand{\bN}{{\mathbb N}}
\newcommand{\bQ}{{\mathbb Q}}
\newcommand{\bR}{{\mathbb R}}
\newcommand{\bZ}{{\mathbb Z}}
\newcommand{\fB}{{\mathfrak B}}
\newcommand{\fD}{{\mathfrak D}}
\newcommand{\fX}{{\mathfrak X}}
\newcommand{\fg}{{\mathfrak g}}
\newcommand{\fn}{{\mathfrak n}}
\newcommand{\fh}{{\mathfrak h}}
\newcommand{\Vt}{\ensuremath\underline{\underline{V}}}
\newcommand{\Al}{\ensuremath\underline{\underline{A_\Lambda}}}
\newcommand{\Ch}{{\mathrm C}{\mathrm s}{\mathrm h}_{G^\star}(\fD)}
\begin{document}
\title{Kac-Moody Groups and Their Representations}
\author{Dmitriy Rumynin}
\email{D.Rumynin@warwick.ac.uk}
\address{Department of Mathematics, University of Warwick, Coventry, CV4 7AL, UK
  \newline
\hspace*{0.31cm}  Associated member of Laboratory of Algebraic Geometry, National
Research University Higher School of Economics, Russia}
\thanks{The research was partially supported by the Russian Academic Excellence Project `5--100' and by Leverhulme Foundation. The author would like to thank Inna Capdeboscq and 
Timoth\'{e}e Marquis
for valuable discussions.}
\date{December 18, 2017}
\subjclass{Primary  20G44; Secondary 20G05}
\keywords{Kac-Moody group, Kac-Moody algebra, adjoint representation, smooth representations, completion, Davis
  realisation}

\dedicatory{To Leonid Arkadievich Bokut with admiration}
\begin{abstract}
  In this expository paper we review some recent results about
  representations of Kac-Moody groups. We sketch the construction of these groups. 
  If practical, we present the ideas behind the proofs of theorems.
  At the end we pose open questions.
\end{abstract}

\maketitle

Kac-Moody Lie algebras are well-known generalisations
of simple finite-dimensional Lie algebras,
subject of 1533 research papers on MathSciNet and
at least 3 beautiful monographs \cite{Kac,Wan,Carter}
Kac-Moody groups are less well-known cousins,
subject of only 214 research papers on MathSciNet.
One issue with them is that there are several different notions
of a Kac-Moody group:
\begin{itemize}
\item a group valued functor on commutative rings defined by Tits,
a generalisation of  $R \mapsto \SL_n (R[z,z^{-1}])$,
\item a locally compact totally disconnected group,
  a generalisation of $\SL_n (\bF_q ((z)) \; )$,
\item an ind-algebraic group,
  a generalisation of $\SL_n (\bC ((z)) \; )$,
\item  a more complicated topological group, e.g.,
$\SL_n (\bQ_p ((z)) \; )$.
\end{itemize}
In this survey, we review some new results
about the first two types of Kac-Moody groups
and their representations. We give examples and sketch proofs
whenever it is practical. The only completely new results
are in Section~\ref{s1.4} where full proofs are given.

There are instructional sources about their Group Theory
and Geometry \cite{CaprRem, Marquis, Remy}
but not about their Representation Theory.
A reader interested in ind-algebraic Kac-Moody groups
can consult a monograph \cite{Ku}
but someone who wants to learn about more complicated Kac-Moody groups
will need to look at scholarly sources \cite{Rou1,Rou2}. 
We start without further ado.

\section{Representations of uncompleted group} \label{s1}
\subsection{Kac-Moody Lie algebra}\label{s1.1}
Let $\mA=(A_{i,j})_{n\times n}$ be a square
matrix with coefficients in a commutative
ring $\bK$.
{\em A realisation of $\mA$} is a collection 
$\mR = (\fh,h_1,\ldots h_n,\alpha_1, \ldots \alpha_n)$
where $\fh$ is a finitely generated free $\bK$-module,
$h_i$ are $\bK$-linearly independent elements of $\fh$,
$\alpha_j$ are $\bK$-linearly independent elements of $\fh^\ast$,
and
$\alpha_j (h_i) = A_{ij}$ for all $i$ and $j$.

A realisation gives several interesting Lie $\bA$-algebras
for any commutative $\bK$-algebra $\bA$. 
The first Lie $\bA$-algebra is $\widetilde{L}_\mR(\bA)$:
it is generated by $\fh_\bA = \fh\otimes_{\bK}\bA$ and elements
$e_1, \ldots e_n, f_1 \ldots f_n$ subject to the relations
$$
[h,e_i] = \alpha_i (h) e_i,\;  [h, f_j] = -\alpha_j(h) f_j, \;
[h^\prime ,h]=0, \ 
[e_i, f_i] = h_i, \ [e_i, f_j] = 0 \mbox{ if } i\neq j
$$
for all $h^\prime ,h\in \fh_\bA$. 
The Lie algebra $\widetilde{L}_\mR(\bA)$ is graded by the root group
$X(\mR)$, the free abelian group generated by elements $\alpha_i$.
The grading  is given by
$$
\deg (h) = 0, \; \deg (e_i) = \alpha_i, \; \deg (f_i) = -\alpha_i \; .
$$
Let $I_\bA$ be the sum of all ideals of $\widetilde{L}_\mR(\bA)$,
contained in the non-zero graded part
$\oplus_{\gamma\neq 0} \widetilde{L}_\mR(\bA)_\gamma$.
The second Lie algebra is
$\widehat{L}_\mR(\bA)\coloneqq \widetilde{L}_\mR(\bA)/I_\bA$
and the third Lie algebra is
%
${L}_\mR(\bA)\coloneqq \widehat{L}_\mR(\bK)\otimes_\bK \bA$.
Although there is some literature on ${L}_\mR(\bA)$
for a general $\mA$ \cite{VK},
these algebras merit further investigation
(cf.~\ref{s3.1} and \ref{s3.2}).

If $\mA$ is a generalised Cartan matrix,
we set $\bK=\bZ$,
call a realisation (over $\bZ$)
{\em a root datum}
and denote it $\mD$.
While both
$\widehat{L}_\mD(\bA)$
and
${L}_\mD(\bA)$
deserve to be called {\em Kac-Moody algebras},
the actual definition of a Kac-Moody algebra is different.
Let $\mU_\bZ$ be the divided powers integral form
of the universal enveloping algebra
$U({L}_\mD(\bC))$. Then {\em a Kac-Moody algebra} is
defined as
$$
\fg_\bZ \coloneqq {L}_\mD(\bC) \cap \mU_\bZ \; , \ \ 
\fg_\bA \coloneqq \fg_\bZ \otimes_\bZ \bA \; . 
$$
It inherits a triangular decomposition
$\fg_\bA =
(\fn_{-} \otimes \bA)
\oplus
(\fh \otimes \bA)
\oplus
(\fn_{+} \otimes \bA)$
from
$\fg_\bZ =
\fn_{-}
\oplus \fh \oplus
\fn_{+}$
where
$\fn_{- \, \bC}$ 
is the Lie subalgebra of
$\fg_{\bC}$ 
generated by all $f_i$,
$\mU_-$ is the divided powers $\bZ$-form of $U(\fn_{-\, \bC})$
and 
$\fn_{-}\coloneqq \fn_{-\, \bC}\cap \mU_-$
(ditto for $\fn_+$ using $e_i$-s and $\mU_+$).
If $\bF$ is a field of characteristic $p$,
the Lie algebra $\fg_\bF$
is restricted with the $p$-operation
$$
(h\otimes 1)^{[p]} = h\otimes 1, \ \ 
(x\otimes 1)^{[p]} = x^p \otimes 1
\ \mbox{ where } \
h\in \fh, \; x\in \fn_\pm
$$
where $x^p$ is calculated inside the associative $\bZ$-algebra
$\mU_\pm \leq \mU$
\cite[Th. 4.39]{Marquis}. 

If $p>\max_{i\neq j} (-A_{i,j})$, then all the three Kac-Moody
coincide: $\widehat{L}_\mD(\bF) = {L}_\mD(\bF) = \fg_\bF$
\cite{Rou2, Marquis}
but it is probably no longer true for small primes.



\subsection{Kac-Moody group}\label{s1.2}
The algebras $\mU_\bZ$ and $\fg_\bZ$ inherit the grading by $X(\bR)$.
It is also known as
the root decomposition
$$
\fg_\bC = \bigoplus_{\alpha\in \Phi \subseteq X(\mD)} \fg_{\bC\, \alpha}
\; .
$$
The set of roots splits into two disjoint parts:
real roots $\Phi^{re} \coloneqq W\{\alpha_1, \ldots \alpha_n\}$
(where $W$ is the Weyl group)
and imaginary roots $\Phi^{im} \coloneqq \Phi \setminus \Phi^{im}$.

{\em The Kac-Moody group} is a functor $G_\mD$ from commutative rings to groups.
Its value on a field $\bF$ can be described as
$$
G_\mD (\bF)
\; = \;
T \ast \Asterisk_{\alpha\in\Phi^{re}} U_\alpha
/ \langle \;\mbox{Tits' relations}\;\rangle
, \ \
T= \fh\otimes_\bZ \bF^\times, \ \
U_\alpha \cong  \bF^+,
$$
where $T$ is a torus
and 
$U_\alpha = \{ X_\alpha (\vt) \}$, 
$X_\alpha (\vt)X_\alpha (\vs)=X_\alpha (\vt+\vs)$
is a root subgroup. 
There are different ways to write Tits' relations: the reader should consult
classical papers \cite{CaCh,Tits}
for succinct presentations. 
Note that Tits' Relations have infinitely many generators and relations
unless $\mA$ is of finite type. 

However, if the field $\bF=\bF_q$, $q=p^m$ is finite
and under mild assumptions on $\mA$, the groups $G_\mD (\bF_q)$
are finitely presented \cite{AbMu}
(cf. \cite{CKR2} for concrete finite presentations  of affine groups)
and simple \cite{CaprRem1}. 
Thus, the groups $G_\mD (\bF_q)$
form a good source of
finitely-presented simple (non-linear) groups. 

It is important for us that they have a BN-pair with
$B= T \ltimes U_+$ where $U_+$ is the subgroup generated by
all $U_\alpha$ for positive real roots $\alpha$.

\subsection{Adjoint representation}\label{s1.3}
The group $G_\mD (\bF)$ acts the Lie algebra $\fg_\bF$ via adjoint
action \cite{Marquis,Remy}.
The torus action comes from the $X(\mD)$-grading
$$
\Ad (h\otimes \vt) ( a )
=
\vt^{\alpha (h)} a
\ \mbox{ where } \
a\in (\fg_\bF)_\alpha
\; 
$$
and the action of $U_\alpha$ is exponential:
$$
\Ad (X_\alpha (\vt)) (a)
= \ve^{\ad (\vt e_\alpha)} (a)
=
\sum_{n=0}^\infty \vt^n \ad (e_\alpha^{(n)}) (a)
$$
where $e_\alpha$ (rather than $e_\alpha \otimes 1$)
is a non-zero element of $\fg_{\bF\, \alpha}$ and
$e_\alpha^{(n)}\in \mU\otimes \bF$ is its divided power.
Notice that $\fg_{\bF\, \alpha}$ is one-dimensional
for a real root $\alpha$.

We denote the image of $\Ad$ by $G^{ad}_\mD (\bF)$.

\subsection{Over-restricted representations}\label{s1.4}
Let $\bF$ be a field of positive characteristic $p$
in this section.
A representation $(V,\rho)$ of the Lie algebra $\fg_\bF$
is called {\em restricted}
if $\rho (x)^p = \rho (x^{[p]})$ for all $x\in\fg_\bF$.
Each real root $\alpha$ yields an additive family
of linear operators on a restricted representation
$$
Y_\alpha (\vt) \coloneqq \ve^{\rho (e_\alpha)}
= \sum_{k=0}^{p-1} \frac{1}{k!}\rho (e_\alpha)^k.
$$
These operators do not define an action of $G_\mD (\bF)$
in general. 
The concept of {\em an over-restricted} representation,
proposed recently
to integrate representations
from Lie algebras to algebraic groups \cite{RW},
proves beneficial here as well.
We say that a restricted representation
$(V,\rho)$ of  $\fg_\bF$
is {\em over-restricted}
if $\rho (e_{\alpha})^{\lfloor (p+1)/2\rfloor}=0$
for any real root $\alpha$. 

\begin{prop} (cf. \cite{RW})
\label{abs_chev}
Suppose that
$(V,\rho)$ is an
over-restricted representation
$\fg_\bF$.
If
$\ad (e_{\alpha}^{(p)}) (x)=0$ for some $x\in\fg_\bF$,
then
\begin{equation}
  \label{eq1}
\rho\big( \Ad (X_{\alpha}(\vt))(x) \big)
=
Y_\alpha (\vt)
\rho (x)
Y_\alpha (-\vt) \; . 
\end{equation}
\end{prop}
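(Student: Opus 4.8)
The plan is to treat both sides of \eqref{eq1} as polynomials in $\vt$ with coefficients in $\End(V)$ and to match them degree by degree. The left-hand side is a genuine polynomial: since $\alpha$ is real, $\ad(e_\alpha)$ is locally nilpotent on $\fg_\bF$, so $\Ad(X_\alpha(\vt))(x)=\sum_{n\ge 0}\vt^n\ad(e_\alpha^{(n)})(x)$ is a finite sum and the coefficient of $\vt^n$ on the left is $\rho(\ad(e_\alpha^{(n)})(x))$. On the right I would expand
\[
Y_\alpha(\vt)\rho(x)Y_\alpha(-\vt)=\sum_{a,b=0}^{p-1}\frac{(-1)^b}{a!\,b!}\,\vt^{\,a+b}\,\rho(e_\alpha)^a\rho(x)\rho(e_\alpha)^b .
\]
The decisive use of over-restrictedness is here: if $a+b\ge p$ then $\max(a,b)\ge\lfloor(p+1)/2\rfloor$, so one of $\rho(e_\alpha)^a,\rho(e_\alpha)^b$ vanishes. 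Hence the right-hand side is a polynomial of degree $<p$, and for $n<p$ its coefficient of $\vt^n$ collapses to $\frac{1}{n!}(\ad\rho(e_\alpha))^n(\rho(x))$.

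For the low-degree terms $n<p$ the identity is then formal. Because $\rho$ is a homomorphism of Lie algebras, $(\ad\rho(e_\alpha))^n(\rho(x))=\rho((\ad e_\alpha)^n(x))$, and since $n!$ is invertible in $\bF$ we have $\frac{1}{n!}(\ad e_\alpha)^n(x)=\ad(e_\alpha^{(n)})(x)$. Thus the coefficients of $\vt^n$ agree for every $n<p$, matching the first $p$ terms of the left-hand side.

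It remains to show that the left-hand coefficient $\rho(\ad(e_\alpha^{(n)})(x))$ vanishes for $n\ge p$, since the right-hand side contributes nothing in those degrees; this is where the hypothesis $\ad(e_\alpha^{(p)})(x)=0$ is consumed. One cannot argue that $\ad(e_\alpha^{(n)})(x)$ itself is zero — for indefinite $\fg$ the $\alpha$-strings in the adjoint module are arbitrarily long, so $\ad(e_\alpha^{(n)})(x)$ may be a nonzero root vector — hence the vanishing must genuinely occur only after applying $\rho$. I would pass to the divided-power form $\mU$, where the adjoint action unfolds through the coproduct and antipode as $\ad(e_\alpha^{(n)})(x)=\sum_{i+j=n}(-1)^j e_\alpha^{(i)}\,x\,e_\alpha^{(j)}$ in $\mU\otimes\bF$. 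Using that an over-restricted representation extends to an action of this divided-power algebra (the mechanism of \cite{RW}), and that over-restrictedness kills every summand with $i,j<p$ and $i+j\ge p$, the hypothesis $\ad(e_\alpha^{(p)})(x)=0$ becomes a commutation relation between $\rho(e_\alpha^{(p)})$ and $\rho(x)$. Feeding this back into the expansion for $n\ge p$, together with the product rule $e_\alpha^{(i)}e_\alpha^{(j)}=\binom{i+j}{i}e_\alpha^{(i+j)}$ and an induction on the base-$p$ digits of $n$, should force each remaining coefficient to vanish.

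The hard part will be exactly this last step: justifying rigorously that $\rho$ transports the $\mU\otimes\bF$-identities to $\End(V)$ for an over-restricted module, and then running the combinatorial induction that propagates the relation on $\rho(e_\alpha^{(p)})$ through all higher divided powers $e_\alpha^{(n)}$, $n\ge p$. By contrast, the low-degree matching and the over-restricted truncation are routine; the genuine subtlety, inherited from \cite{RW}, lies entirely in controlling the divided powers beyond degree $p$.
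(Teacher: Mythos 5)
Your expansion of both sides and the degree-by-degree comparison for $n<p$ is exactly the paper's argument: the coefficient identity you derive is the paper's Formula~(\ref{ind_f}), and your observation that $a+b\ge p$ forces $\max(a,b)\ge\lfloor(p+1)/2\rfloor$, so that over-restrictedness lets one pass from the truncated double sum to the full product $Y_\alpha(\vt)\rho(x)Y_\alpha(-\vt)$, is precisely the paper's $\spadesuit$-step. Up to that point the proposal is correct and coincides with the paper.

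The gap is the step you yourself flag as ``the hard part'': it is never carried out, and the route you sketch for it is doubtful. The paper disposes of the degrees $n\ge p$ by asserting that the hypothesis $\ad(e_\alpha^{(p)})(x)=0$ already forces $\ad(e_\alpha^{(n)})(x)=0$ for all $n\ge p$, so the left-hand side is itself a polynomial of degree at most $p-1$ and nothing remains to be matched. (For many $n$ this follows from the composition rule $\ad(e_\alpha^{(n-p)})\circ\ad(e_\alpha^{(p)})=\binom{n}{p}\ad(e_\alpha^{(n)})$ in $\mU_\bZ\otimes\bF$, since $\binom{n}{p}$ is a unit whenever the second base-$p$ digit of $n$ is nonzero; the remaining cases, e.g.\ $n=p^2$, are exactly where your worry about long $\alpha$-strings has teeth, so if you believe the implication fails you should exhibit a counterexample rather than route around it.) Your replacement argument has two problems. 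First, an over-restricted representation is a module over the restricted enveloping algebra, not over $\mU\otimes\bF$: nothing supplies operators $\rho(e_\alpha^{(n)})$ for $n\ge p$, and indeed the whole purpose of the over-restricted condition is to avoid needing higher divided powers on $V$; so the claim that ``$\rho$ transports the $\mU\otimes\bF$-identities to $\End(V)$'' is unjustified. Second, even granting that, ``should force each remaining coefficient to vanish'' is a plan, not a proof. As written, the proposal establishes~(\ref{eq1}) only modulo $\vt^{p}$; you must either justify the vanishing of $\ad(e_\alpha^{(n)})(x)$ for $n\ge p$ (the paper's route) or actually complete the induction you gesture at.
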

\begin{proof}
Observe by induction that for each $k=1,2,\ldots p-1$
\begin{equation}
  \label{ind_f}
\rho(\frac{1}{k!}\ad(e_\alpha)^k(x)) =
\sum_{j=0}^k \frac{(-1)^j}{(k-j)!j!} \rho(e_\alpha)^{k-j}\rho(x)\rho(e_\alpha)^j
\;
.
\end{equation}
The condition $\ad (e_{\alpha}^{(p)}) (x)=0$
implies that $\ad (e_{\alpha}^{(n)}) (x)=0$
for all $n\geq p$ and
$\rho\big( \Ad (X_{\alpha}(\vt))(x) \big)
=
\sum_{k=0}^{p-1} \rho(\frac{1}{k!}\ad(\vt e_\alpha)^k(x))
$
stops at degree $p-1$.
Using Formula~(\ref{ind_f}), this is equal to
$$
\sum_{i+j=0}^{p-1}
\frac{(-1)^j}{i!j!} \rho(\vt e_\alpha)^{i}\rho(x)\rho(\vt e_\alpha)^j
\stackrel{\spadesuit}{=}
\sum_{i,j=0}^{p-1} \frac{(-1)^j}{i!j!} \rho(\vt e_\alpha)^{i}\rho(x)\rho(\vt e_\alpha)^j
=
\ve^{\rho(\vt e_\alpha)}\rho (x) \ve^{-\rho(\vt e_\alpha)},
$$
exactly the right hand side.
Notice that
Equality~{$\spadesuit$} holds because $(V,\rho)$ is over-restricted:
terms on the right,
missing from the left, are all zero.
\end{proof}

Consider an $X(\mD)$-graded restricted representation $(V,\rho)$ of $\fg$.
Grading gives an action of $T$ on $V$ by
$\widehat{\rho} (h\otimes \vt) ( v_\alpha )
=
\vt^{\alpha (h)} v_\alpha$.
An analogue of Proposition~\ref{abs_chev}
holds for $T$:
\begin{equation}
  \label{eq2}
\rho\big( \Ad (h \otimes \vt) (x) \big)
=
\widehat{\rho} (h \otimes \vt)
\rho (x)
\widehat{\rho} (h \otimes \vt^{-1})
\; . 
\end{equation}
Let $G_V$ be the subgroup of $\GL (V)$ generated by $\widehat{\rho}(T)$ and all $Y_\alpha (\vt)$.
\begin{thm} \label{extension_g}
  Suppose that $p>\max_{i\neq j}(-A_{i,j})$.
If $(V,\rho)$ is an $X(\mD)$-graded
over-restricted representations of $\fg_\bF$,
faithful on both $T$ and $\fg_\bF$,
then
$$
\phi: G_V\rightarrow G^{ad}_\mD (\bF), \ 
\phi (Y_\alpha (\vs) ) = X_\alpha (\vs), \ 
\phi (\widehat{\rho}(t)) = t
\ \mbox{ for } \
t\in T
$$
is a surjective homomorphism of groups
whose kernel is central and consists of $\fg_\bF$-automorphisms
of $V$.
\end{thm}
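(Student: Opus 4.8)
The plan is to build $\phi$ as the map recording how $G_V$ acts by conjugation on the copy $\rho(\fg_\bF)\subseteq\End(V)$, and then to recognise this conjugation action as the adjoint action of $G^{ad}_\mD(\bF)$. First I would note that the hypothesis $p>\max_{i\neq j}(-A_{i,j})$ forces $\ad(e_\alpha^{(p)})$ to vanish on $\fg_\bF$ for every real root $\alpha$ (this is part of the coincidence $\widehat{L}_\mD(\bF)=L_\mD(\bF)=\fg_\bF$ recalled in \S\ref{s1.1}), so Proposition~\ref{abs_chev} applies to every $x\in\fg_\bF$ with no side condition. Consequently, for each generator $g$ of $G_V$ and each $x\in\fg_\bF$ one has $g\,\rho(x)\,g^{-1}\in\rho(\fg_\bF)$: for $g=Y_\alpha(\vt)$ this is exactly $\rho(\Ad(X_\alpha(\vt))(x))$ by \eqref{eq1}, and for $g=\widehat{\rho}(t)$ it is $\rho(\Ad(t)(x))$ by \eqref{eq2}. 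Since $\rho$ is faithful on $\fg_\bF$, conjugation induces a well-defined $\phi(g)\in\GL(\fg_\bF)$ via $\rho(\phi(g)(x))=g\,\rho(x)\,g^{-1}$, which is automatically a homomorphism $G_V\to\GL(\fg_\bF)$ because conjugation is. On generators $\phi$ sends $Y_\alpha(\vt)\mapsto\Ad(X_\alpha(\vt))$ and $\widehat{\rho}(t)\mapsto\Ad(t)$; as $G_\mD(\bF)$ is generated by the $X_\alpha(\vt)$ and $T$, these images generate $\Ad(G_\mD(\bF))=G^{ad}_\mD(\bF)$, so $\phi$ surjects onto $G^{ad}_\mD(\bF)$ and is the asserted map.

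The kernel description is then immediate: $\phi(g)=\id$ means $g\,\rho(x)\,g^{-1}=\rho(x)$ for all $x$, i.e. $g$ centralises $\rho(\fg_\bF)$, i.e. $g$ is an automorphism of $V$ as a $\fg_\bF$-module. Thus $\ker\phi=G_V\cap\Aut_{\fg_\bF}(V)$, which is the second claim.

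For centrality I would avoid arguing directly that such a $g$ preserves the $X(\mD)$-grading (the subtle point, since centralising $\rho(\fg_\bF)$ controls the $Y_\alpha(\vt)$ but not visibly the non-coroot directions of $\widehat{\rho}(T)$). Instead I would factor $\phi$ through the group itself. The operators $Y_\alpha(\vt)$ are genuine exponentials of the nilpotents $\rho(e_\alpha)$: over-restrictedness gives $Y_\alpha(\vt)Y_\alpha(\vs)=Y_\alpha(\vt+\vs)$, and \eqref{eq2} gives $\widehat{\rho}(h\otimes\vt)Y_\alpha(\vs)\widehat{\rho}(h\otimes\vt)^{-1}=Y_\alpha(\vt^{\alpha(h)}\vs)$. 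The plan is to upgrade these to the full set of Tits' relations, so that $X_\alpha(\vt)\mapsto Y_\alpha(\vt)$, $t\mapsto\widehat{\rho}(t)$ extends to a necessarily surjective homomorphism $\Psi:G_\mD(\bF)\twoheadrightarrow G_V$ with $\phi\circ\Psi=\Ad$. Granting $\Psi$, surjectivity gives $\ker\phi=\Psi(\ker\Ad)=\Psi(Z(G_\mD(\bF)))$, and since $\ker\Ad$ is the centre of $G_\mD(\bF)$ its image under the surjection $\Psi$ is central in $G_V$, as required; faithfulness of $\widehat{\rho}$ on $T$ is what keeps $\Psi|_T$ injective and so pins down $\ker\phi$ on the torus.

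The main obstacle is the construction of $\Psi$, i.e. checking that the truncated exponentials $Y_\alpha(\vt)$ satisfy Tits' commutation relations for prenilpotent pairs $\{\alpha,\beta\}$ (the Chevalley commutator formula) together with the rank-one relations for $\{\alpha,-\alpha\}$ \cite{CaCh,Tits}. This is precisely the assertion that an over-restricted representation integrates from $\fg_\bF$ to $G_\mD(\bF)$ in the spirit of \cite{RW}: the commutator of two truncated exponentials must close up, within the truncation, into a product of truncated exponentials indexed by $i\alpha+j\beta$, with the structure constants of $\fg_\bF$. Here both hypotheses do the work — $p>\max_{i\neq j}(-A_{i,j})$ guarantees that the relevant divided powers vanish and the three forms agree, so the structure constants are the classical ones, while over-restrictedness ($\rho(e_\alpha)^{\lfloor(p+1)/2\rfloor}=0$) forces the terms beyond degree $p-1$ to cancel exactly as in the proof of Proposition~\ref{abs_chev}. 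Verifying this closure for every prenilpotent pair, uniformly in $\vt$ and $\vs$, is the computational heart of the argument.
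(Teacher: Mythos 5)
Your construction of $\phi$ as the conjugation action of $G_V$ on $\rho(\fg_\bF)$ is, in substance, the paper's argument in different packaging: the paper presents both $G_V$ and $G^{ad}_\mD(\bF)$ as quotients of the free product of $T$ and the groups $U_\alpha$ and compares the two kernels, but the working parts — Formulas~(\ref{eq1}) and~(\ref{eq2}), faithfulness of $\rho$ on $\fg_\bF$, and a generation argument — are the same, and your identification $\ker\phi=G_V\cap\Aut_{\fg_\bF}(V)$ also matches. One caveat on your opening step: the claim that $p>\max_{i\neq j}(-A_{i,j})$ forces $\ad(e_\alpha^{(p)})$ to vanish on all of $\fg_\bF$ is not justified and should not be expected outside finite and affine type. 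For indefinite $\mA$ there are imaginary roots $\beta$ with $|\langle\beta,\alpha^\vee\rangle|\geq p$, so the $\alpha$-string through $\beta$ has length exceeding $p$ and there is no reason for $\ad(e_\alpha^{(p)})$ to kill $\fg_\beta$; Proposition~\ref{abs_chev} genuinely needs its side condition. The paper applies it only to $x=e_\beta$ and $x\in\fh_\bF$ and then invokes the fact (this is where the hypothesis on $p$ enters) that these generate $\fg_\bF$ as a Lie algebra. Your argument survives the same repair, since conjugation by a generator of $G_V$ and $\Ad$ of the corresponding generator of $G_\mD(\bF)$ are both Lie algebra automorphisms agreeing on Lie generators of $\rho(\fg_\bF)$.

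The genuine gap is in the centrality of $\ker\phi$. You correctly isolate the delicate point — an element centralising $\rho(\fg_\bF)$ visibly commutes with every $Y_\alpha(\vs)$ (a polynomial in $\rho(e_\alpha)$) but not visibly with $\widehat{\rho}(T)$ — but the bypass you propose is not available at this stage. Your route needs an honest homomorphism $\Psi:G_\mD(\bF)\rightarrow G_V$ with $\phi\circ\Psi=\Ad$, and you acknowledge that its existence rests on verifying all of Tits' relations (notably the Chevalley commutator formulas for prenilpotent pairs) for the truncated exponentials $Y_\alpha(\vt)$, which you do not carry out. This is not a deferrable routine check: it is essentially the integration problem of Section~\ref{s3.3}, and the paper itself only extracts a \emph{projective} representation of $G_\mD(\bF)$, as a corollary \emph{of} Theorem~\ref{extension_g} and only under the extra hypothesis that $(V,\rho)$ is a brick. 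So your centrality argument assumes something strictly stronger than, and logically downstream of, the statement being proved. The paper's own argument stays inside $G_V$: an element $y\in\ker\phi$ commutes with all $Y_\alpha(\vs)$ as above and with $\widehat{\rho}(T)$ (the step justified by faithfulness of the $T$-action — e.g.\ by decomposing $y$ into $X(\mD)$-graded components and noting that all generators of $G_V$ are graded operators), hence commutes with every generator of $G_V$ and is central. You need to supply an argument of this kind for the commutation with $\widehat{\rho}(T)$; the detour through $\Psi$ does not close the gap.
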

\begin{proof}
  Let 
  $H$ be the free product of $T$ and all additive groups
  $U_\alpha$, $\alpha\in\Phi^{re}$.
  Both $G_V$ and $G^{ad}_\mD(\bF)$ are naturally quotients of $H$.
  If $x_1\ast\ldots \ast x_n \in \ker (H\rightarrow G_V)$
  where all $x_i$ are from the constituent groups then
$$
\phi (x_1)\phi (x_2) \ldots \phi (x_n) = I_V
\; .
$$
Formulas~(\ref{eq1}) and (\ref{eq2})
imply
that 
$$
\rho \big( [\Ad (x_1)\Ad (x_2) \ldots \Ad (x_n)] (e_\beta) \big)
= 
\rho (e_\beta), \ \
\rho \big( [\Ad (x_1)\Ad (x_2) \ldots \Ad (x_n)] (h) \big)
= 
\rho (h)
$$
for all $h\in \fh_\bF$ and real roots $\beta$.
Our restriction on $p$ imply that $\fg_\bF$ is generated
by $\fh_\bF$ and all $e_\beta$ \cite{Rou2}.
Consequently, 
$$
\rho \big( [\Ad (x_1)\Ad (x_2) \ldots \Ad (x_n)] (y) \big)
= 
\rho (y)
$$
for all $y\in \fg_\bF$.
Since $\rho$ is injective it follows that
$[\Ad (x_1)\Ad (x_2) \ldots \Ad (x_n)] (y) = I_\fg$
  and
  $x_1\ast\ldots \ast x_n \in \ker (H\rightarrow G^{ad}_\mD (\bF))$.
Hence, 
$\phi$ is well-defined.

It remains to determine the kernel of $\phi$.
Suppose
$y = x_1 x_2 \ldots x_n \in \ker (\phi)$
with all $x_i$ are either $Y_{\alpha} (\vs)$,  or in $T$.
Arguing as above, 
$\rho(z) = \rho (\phi (y) (z)) = y\rho (z) y^{-1}$
for all $z\in\fg$.
So $y\in \End (V,\rho)$:
it commutes with all $\rho(e_\alpha)$, hence with all $Y_\alpha (\vs)$.
Since $T$ acts faithfully, $y$ commutes
with $T$ as well. Commuting with all generators of $G_V$,
$y$ is inevitably central.
\end{proof}

As soon as there are few endomorphisms, the map $\phi$ in Theorem~\ref{extension_g}
can be ``reversed'' to define a projective representation of the Kac-Moody group.

\begin{cor}
  Suppose that in the conditions of Theorem~\ref{extension_g}
  the representation 
  $(V,\rho)$ is a brick, i.e.,
  $\End (V,\rho) = \bF$.
Then
$$
\theta: G_\mD (\bF) \rightarrow \GL (V), \
\theta (X_\alpha (\vs) ) = Y_\alpha (\vs), \ 
\theta (t) = \widehat{\rho}(t)
\ \mbox{ for } \
t\in T
$$
extends to a projective representation of $G_\mD (\bF)$.
\end{cor}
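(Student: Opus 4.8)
The plan is to realise the projective representation as a homomorphism $G_\mD(\bF)\to\mathrm{PGL}(V)$ obtained by ``inverting'' the map $\phi$ of Theorem~\ref{extension_g} modulo scalars. The first step is to pin down $\ker(\phi)$. By Theorem~\ref{extension_g} this kernel consists of central $\fg_\bF$-automorphisms of $V$; since $(V,\rho)$ is a brick we have $\End(V,\rho)=\bF$, so every such automorphism is an invertible scalar operator. Hence $\ker(\phi)\subseteq\bF^\times I_V$, i.e.\ $\ker(\phi)$ lands in the centre of $\GL(V)$ and therefore dies in $\mathrm{PGL}(V)$.

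Next I would reuse the common presentation from the proof of Theorem~\ref{extension_g}. Let $H$ be the free product of $T$ and the groups $U_\alpha$, $\alpha\in\Phi^{re}$. Both $G_V$ and $G^{ad}_\mD(\bF)$ are quotients of $H$, and the two resulting maps to $G^{ad}_\mD(\bF)$ coincide: one factors as $H\to G_V\xrightarrow{\phi}G^{ad}_\mD(\bF)$, the other as $H\to G_\mD(\bF)\xrightarrow{\Ad}G^{ad}_\mD(\bF)$. Consider the composite
$$
\bar\mu\colon H\longrightarrow G_V\hookrightarrow\GL(V)\longrightarrow\mathrm{PGL}(V),
$$
which sends $X_\alpha(\vs)$ to the class of $Y_\alpha(\vs)$ and $t\in T$ to the class of $\widehat\rho(t)$; these operators are invertible (the $Y_\alpha(\vs)$ are unipotent by over-restrictedness and $\widehat\rho(t)$ is an invertible diagonal operator), so $\bar\mu$ is well defined.

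The crux is to check that $\bar\mu$ kills every Tits relation. Let $w\in H$ be any word that becomes trivial in $G_\mD(\bF)$. Computing its image in $G^{ad}_\mD(\bF)$ along $H\to G_\mD(\bF)\xrightarrow{\Ad}G^{ad}_\mD(\bF)$ gives $\Ad(1)=1$; tracing the other factorisation, the image $g_w$ of $w$ in $G_V$ lies in $\ker(\phi)$. By the first step $g_w\in\bF^\times I_V$, so $\bar\mu(w)=1$ in $\mathrm{PGL}(V)$. Consequently $\bar\mu$ factors through $G_\mD(\bF)=H/\langle\text{Tits relations}\rangle$, producing a homomorphism $\bar\theta\colon G_\mD(\bF)\to\mathrm{PGL}(V)$ sending $X_\alpha(\vs)$ and $t$ to the classes of $Y_\alpha(\vs)$ and $\widehat\rho(t)$. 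This is precisely the asserted projective representation, the multiplicativity of $\theta$ holding up to the scalars recorded by $\ker(\phi)$.

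Most of the substance is already carried by Theorem~\ref{extension_g}; the single place where a hypothesis is genuinely used is the inclusion $\ker(\phi)\subseteq\bF^\times I_V$, and this is exactly where the brick condition $\End(V,\rho)=\bF$ is indispensable: without it $\ker(\phi)$ could contain non-scalar $\fg_\bF$-automorphisms and $\bar\mu$ would not descend to $\mathrm{PGL}(V)$. A more conceptual variant, should one prefer to exhibit the cocycle explicitly, is to form the fibre product $P=G_\mD(\bF)\times_{G^{ad}_\mD(\bF)}G_V$, which yields a central extension $1\to\ker(\phi)\to P\to G_\mD(\bF)\to 1$ carrying a genuine linear representation $P\to\GL(V)$; pushing out along $\ker(\phi)\hookrightarrow\bF^\times$ gives the same projective representation and identifies its cohomology class with that of this extension.
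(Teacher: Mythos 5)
Your argument is correct and is exactly the route the paper intends: the corollary is stated without a separate proof, the preceding remark that $\phi$ ``can be reversed'' being precisely your observation that the brick hypothesis forces $\ker(\phi)\subseteq\bF^\times I_V$, so that $\Ad$ followed by the inverse of $\phi$ lands in $\mathrm{PGL}(V)$. Your elaboration via the free product $H$ reuses the same machinery as the proof of Theorem~\ref{extension_g}, and the fibre-product remark at the end is a nice bonus but not a departure.
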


If the root datum is simply-connected, i.e., $\alpha_i$ form a basis of $\fh$,
then the group $G_\mD (\bF)$ is generated by $U_\alpha$-s \cite{CaCh}.
Hence, no grading is needed to define a representation of  $G_\mD (\bF)$,
with all the proofs going through as before: 
\begin{cor}
  Suppose that $\mD$ is simply-connected and
  $p>\max_{i\neq j}(-A_{i,j})$.
If $(V,\rho)$ is a faithful, over-restricted brick for $\fg_\bF$,
then
$$
\theta: G_\mD (\bF) \rightarrow \GL (V),  \
\theta (X_\alpha (\vs) ) = Y_\alpha (\vs)
$$
extends to a projective representation of $G_\mD (\bF)$.
\end{cor}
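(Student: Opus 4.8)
The plan is to run the proof of Theorem~\ref{extension_g} with the torus deleted from every group in sight. Because $\mD$ is simply-connected, $G_\mD(\bF)$ is generated by the root subgroups $U_\alpha$, $\alpha\in\Phi^{re}$, and therefore so is its adjoint image $G^{ad}_\mD(\bF)$. I would accordingly take $H\coloneqq\Asterisk_{\alpha\in\Phi^{re}}U_\alpha$ (with no free factor $T$ this time) and let $G_V\leq\GL(V)$ be the subgroup generated by the operators $Y_\alpha(\vs)$ alone. As before, both $G_V$ and $G^{ad}_\mD(\bF)$ are quotients of $H$, through $X_\alpha(\vs)\mapsto Y_\alpha(\vs)$ and $X_\alpha(\vs)\mapsto\Ad(X_\alpha(\vs))$ respectively.

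First I would check that $Y_\alpha(\vs)\mapsto X_\alpha(\vs)$ extends to a surjective homomorphism $\phi\colon G_V\to G^{ad}_\mD(\bF)$, that is, that $\ker(H\to G_V)\subseteq\ker(H\to G^{ad}_\mD(\bF))$. The one new point, compared with Theorem~\ref{extension_g}, is that without a grading we can no longer call on the torus, so Formula~(\ref{eq2}) is unavailable. This is not an obstacle: Formula~(\ref{eq1}) of Proposition~\ref{abs_chev} already applies verbatim to every $h\in\fh_\bF$, since $\ad(e_\alpha)^2(h)=0$ forces $\ad(e_\alpha^{(p)})(h)=0$. Hence conjugation by each $Y_\alpha(\vs)$ intertwines $\rho$ with $\rho\circ\Ad(X_\alpha(\vs))$ on the real root vectors $e_\beta$ and on all of $\fh_\bF$ at once, with the torus playing no role. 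Exactly as in Theorem~\ref{extension_g}, if a word $w\in\ker(H\to G_V)$ then $\Ad(w)$ fixes every $e_\beta$ and every $h\in\fh_\bF$; the bound $p>\max_{i\neq j}(-A_{i,j})$ ensures that these generate $\fg_\bF$, so injectivity of $\rho$ gives $\Ad(w)=\id$ and $w\in\ker(H\to G^{ad}_\mD(\bF))$.

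Next I would compute $\ker\phi$ as in the first corollary. If $y\in\ker\phi$ then $y$ commutes with every $\rho(e_\alpha)$, hence with every $Y_\alpha(\vs)$, hence with every generator of $G_V$; as $(V,\rho)$ is a brick this forces $y\in\End(V,\rho)=\bF$, so $\ker\phi$ consists of scalars, and conversely every scalar lying in $G_V$ is killed by $\phi$. Thus $\ker\phi=G_V\cap\bF^\times I_V$ is precisely the group of scalar matrices in $G_V$, and $\phi$ descends to an isomorphism from $G^{ad}_\mD(\bF)$ onto the image $\overline{G_V}$ of $G_V$ in $\mathrm{PGL}(V)$.

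Finally I would precompose this isomorphism with the adjoint quotient $\Ad\colon G_\mD(\bF)\twoheadrightarrow G^{ad}_\mD(\bF)$. The composite
\[
\bar\theta\colon G_\mD(\bF)\xrightarrow{\ \Ad\ }G^{ad}_\mD(\bF)\xrightarrow{\ \sim\ }\overline{G_V}\hookrightarrow\mathrm{PGL}(V)
\]
is a group homomorphism, i.e.\ a projective representation of $G_\mD(\bF)$, and by construction it carries $X_\alpha(\vs)$ to the class of $Y_\alpha(\vs)$; this is the asserted extension of $\theta$. The step I expect to require the most care is the very first one: verifying that conjugation by a product of the $Y_\alpha(\vs)$ genuinely realises the full adjoint action on $\fg_\bF$ with no torus term available. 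Everything hinges on Formula~(\ref{eq1}) remaining applicable to the Cartan directions, which is exactly what $\ad(e_\alpha)^2(h)=0$ secures, together with the cited fact that $\fh_\bF$ and the real root vectors generate $\fg_\bF$ when $p>\max_{i\neq j}(-A_{i,j})$.
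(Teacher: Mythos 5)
Your proposal is correct and matches the paper's (essentially unwritten) proof: the paper simply observes that, since a simply-connected $G_\mD(\bF)$ is generated by the $U_\alpha$, the arguments of Theorem~\ref{extension_g} and the preceding corollary go through with the torus omitted, which is precisely what you carry out. Your added observation that Formula~(\ref{eq1}) applies to Cartan elements because $\ad(e_\alpha)^2(h)=0$ is a correct filling-in of a detail the paper leaves implicit.
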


\section{Representations of completed group} \label{s2}
\subsection{Completion}\label{s2.1}
The group $G_{\mD} (\bF)$
is also known as the ``minimal'' Kac-Moody group,
while some of its various completions
$\widehat{G_{\mD} (\bF)}$
go under the name a ``maximal'' Kac-Moody group.

Let us consider a group $G$ with a BN-pair $(B,N)$.
Let $\widehat{G}$ be a completion of $G$ with respect to
some topology. 
Is $(\overline{B},N)$
(where $\overline{B}$ is the closure of $B$ in $\widehat{G}$)
a  BN-pair on $\widehat{G}$? 
It depends on circumstances. For example,
consider 
a simple split group scheme $\mG$, $G=\mG (\bF_q [z,z^{-1}])$, 
the group of monomial matrices $N\leq G$
and
positive and negative Iwahori subgroups
$I_\pm = [\mG(\bF_q [z^{\pm 1}])\xrightarrow{z^{\pm 1}\mapsto 0} \mG (\bF_q)]^{-1} (B)$.
Both pairs $(I_\pm ,N)$ are BN-pairs on $G$ but
only $(\overline{I_+},N)$ is a BN-pair on the positive completion
$\widehat{G}=\mG (\bF_q((z))\,)$:
the countable groups $\widehat{I_-}=I_-$ and $N$ cannot generate uncountable $\widehat{G}$.
The following theorem pinpoints the completion process
for groups with a BN-pair under some conditions:
\begin{thm} \cite[Th. 1.2]{CR}
  \label{complete_2}
Let $G$ be a group with a BN-pair $(B,N)$
with Weyl group $(W,S)$ where $S$ is finite. 
Suppose further that a topology $\mT$ on $B$ is given
such that
the four conditions ($1$)--($4$)
hold.
\begin{enumerate}
\item $(B,\mT)$ is a topological group.
\item The completion $\widehat{B}$ is a group.
\item $\mT_1 \coloneqq \{ A\in \mT \,\mid\, 1\in A\}$ is a basis at $1$ of topology on each minimal parabolic
  $P_s$, $s\in S$ that defines a structure of topological group on $P_s$.
\item The index $|P_s:B|$ is finite for each $s\in S$. 
\end{enumerate}
Under these conditions the following statements hold:  
\begin{enumerate}
\item[(a)]
  $\mT_1$ is a basis at $1$ of topology on $G$
  that defines a structure of topological group on $G$.
\item[(b)]  The completion $\widehat{G}$ is a group.
  The completion $\widehat{B}$ is equal to the closure $\overline{B}$.
\item[(c)] The completion $\widehat{G}$ is isomorphic
  to the amalgam $\underset{\fB}{\ast}\;H$
  where \newline 
  $\fB=\{\overline{B},N,\overline{P_s}; s\in S\}$.
\item[(d)] The pair
$(\overline{B},N)$ is a BN-pair on the completed group $\widehat{G}$.
\end{enumerate}
\end{thm}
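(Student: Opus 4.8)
The plan is to treat the four conclusions in order: establish the group topology first, bootstrap the completion from the fact that $B$ is open, and finally read off the amalgam description and the BN-pair axioms from the density of $G$ in $\widehat{G}$.

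For (a) I would take $\mT_1$ as a fundamental system of neighbourhoods of $1$ and invoke the standard criterion: a filter base at $1$ consisting of sets containing $1$ defines a group topology on $G$ precisely when (i) each $U\in\mT_1$ contains some $V\in\mT_1$ with $VV\subseteq U$, (ii) each $U$ contains some $V$ with $V^{-1}\subseteq U$, and (iii) for every $U$ and every $a\in G$ there is $V\in\mT_1$ with $aVa^{-1}\subseteq U$. Conditions (i), (ii) and the filter-base property are exactly the statement that $(B,\mT)$ is a topological group, i.e.\ Condition (1). For (iii) I would reduce to generators: since $G=\langle B,N\rangle$ and $N$ is generated modulo $B\cap N$ by lifts $n_s$ of the $s\in S$, every $a$ is a product of elements of $B$ and of the $n_s$. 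Conjugation by an element of $B$ preserves $\mT_1$ by Condition (1); conjugation by $n_s$ is handled by Condition (3), because $\mT_1$ is a neighbourhood base at $1$ making $P_s$ a topological group and $n_s\in P_s$. Composing these yields (iii) for all $a$. A useful by-product is that $B$, and hence each coset $gB$, is open, since every basic neighbourhood of $1$ lies inside $B$; consequently $B$ is clopen, $G/B$ is discrete, and the subspace topology on $B$ is again $\mT$.

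For (b) I would exploit this openness. The decomposition of $G$ into cosets $gB$ is a partition into clopen pieces, each uniformly isomorphic to $B$, so the completion splits as a disjoint union of translates $g\widehat{B}$, with $\widehat{B}$ a group by Condition (2). Extending the multiplication to $\widehat{G}$ is then a matter of uniform continuity: locally the product map only ever multiplies within translates of $B$, so it extends continuously and the group axioms pass to the limit by density. Because $B$ is clopen, its closure $\overline{B}$ in $\widehat{G}$ is just its own completion, giving $\overline{B}=\widehat{B}$.

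For (c), which I expect to be the crux, I would first record the structural input that the \emph{minimal} group is already an amalgam, $G\cong\underset{\{B,N,P_s\}}{\ast}H$, amalgamated over pairwise intersections; this is the incarnation, for BN-pairs of finite rank, of the simple connectivity of the associated building. The real work is to show that completion commutes with this amalgam, and here Condition (4) is essential: $|P_s:B|<\infty$ forces $\overline{P_s}$ to be a finite union of $\overline{B}$-cosets, so that $\overline{P_s}$ genuinely is the completion of $P_s$ and the gluing data $\overline{B}\cap N$ and $\overline{P_s}\cap N$ stay controlled. The natural homomorphism $\underset{\fB}{\ast}H\to\widehat{G}$, with $\fB=\{\overline{B},N,\overline{P_s}\}$, is surjective because its image is an open subgroup (it contains the open set $\overline{B}$) containing the dense subgroup $G$, hence all of $\widehat{G}$. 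Injectivity is the heart of the matter, and the main obstacle of the whole theorem: I would derive it from the amalgam presentation of $G$ by a density/continuity argument, checking that every relation among elements of $\overline{B},N,\overline{P_s}$ holding in $\widehat{G}$ already follows from the defining relations of the amalgam. The two delicate facts that must be squeezed out of Condition (4) are that no new elements of $N$ appear in the closure, i.e.\ $\overline{B}\cap N=B\cap N$, and that the finitely many coset representatives behave coherently under the limit.

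Finally, for (d) I would verify the BN-pair axioms for $(\overline{B},N)$ on $\widehat{G}$ directly. Generation $\widehat{G}=\langle\overline{B},N\rangle$ follows from (c); $\overline{B}\cap N=T$ is normal in $N$ with $N/T=W$ by the previous step; and the mixing axiom $s\overline{B}w\subseteq\overline{B}w\overline{B}\cup\overline{B}sw\overline{B}$ together with $s\overline{B}s^{-1}\neq\overline{B}$ I would obtain by completing the corresponding relations in $G$, using the completed Bruhat decomposition $\widehat{G}=\bigsqcup_{w\in W}\overline{B}\,w\,\overline{B}$. That the Bruhat cells remain disjoint after completion is again guaranteed by the clopenness of $\overline{B}$ and the finite-index Condition (4), which keeps each $\overline{B}w\overline{B}$ a controlled union of $\overline{B}$-cosets.
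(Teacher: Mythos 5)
Your overall skeleton agrees with the paper's: both treat the amalgam description (c) as the crux, both propagate the topology from $B$ to $G$ by reducing conjugation to the generators $n_s$ via Conditions (1) and (3), and both use Condition (4) to keep $\overline{P_s}$ under control. Your parts (a) and (b) are essentially what is done in \cite{CR}: once $B$ is open, $\widehat{B}=\overline{B}$ and the group structure on $\widehat{G}$ reduces to Condition (2), as you say.

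The genuine gap is in (c), which you correctly identify as the heart of the matter and then leave as a black box. Your plan is to map the abstract amalgam $\underset{\fB}{\ast}H$ onto $\widehat{G}$ and to prove injectivity by ``checking that every relation among elements of $\overline{B},N,\overline{P_s}$ holding in $\widehat{G}$ already follows from the defining relations of the amalgam'' via a ``density/continuity argument''. As stated this does not work: the amalgam is an abstract colimit carrying no topology a priori, so there is nothing in which to pass relations to the limit; and a relation in $\widehat{G}$ among limit points is not in any evident way a limit of relations in $G$ among elements of $B$, $P_s$, $N$. The paper runs the argument in the opposite direction, and this is where all the content sits: it invokes Tits' reconstruction theorem \cite{Tits1} (cf.\ \cite[Th.~5.1.8]{Ku}), which gives sufficient conditions for a system $\fB=\{\overline{B},N,\overline{P_s}\}$ with prescribed intersections to generate its amalgam as a group with BN-pair $(\overline{B},N)$ --- so (d) comes for free, rather than by verifying the axioms against a ``completed Bruhat decomposition'', which is itself a consequence of (d) and not an available tool at that stage. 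The work then consists of checking the technical hypotheses of Tits' theorem for the completed data and identifying the resulting group with $\widehat{G}$ using completeness, openness of $\overline{B}$ and density of $G$. Until you either import Tits' theorem for the reconstruction direction or supply an actual injectivity argument (including a proof that $\overline{B}\cap N=B\cap N$, which you also only assert must be ``squeezed out'' of Condition (4)), steps (c) and (d) remain unproved.
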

\begin{proof}
  It is a well-known theorem of Tits
  that a group with a BN-pair is an amalgam
  of $N$ and its minimal parabolics \cite[Prop. 5.1.7]{Ku}.
Later on Tits has shown how 
to back-engineer this group from such amalgam \cite{Tits1} 
(cf. \cite[Th. 5.1.8]{Ku}). 

This is the heart of the proof:
we pinpoint the completed group in part (c)
but need to check numerous technical conditions of the Tits theorem.
See \cite{CR} for full details.
\end{proof}

Theorem~\ref{complete_2}
gives us {\em a locally pro-$p$-complete} Kac-Moody group
$G^{lpp}\coloneqq \widehat{G_\mD (\bF_q)}$, $q=p^m$ 
  by choosing the pro-$p$-topology on $B$:
  its basis at $1$ is 
  $\{ A\leq B\mid  |B:A|=p^a\ \mbox{for some}\ a\in\bN\}$. 
  The Borel and the minimal parabolic subgroups of $G_\mD (\bF_p)$ are split
  \cite[6.2]{CaprRem} :
  $$
  B=T \ltimes U_+, \ \ 
  P_s= L_s\ltimes U_s
  \mbox{ where }
  L_s=\langle U_{\alpha_i}\cup U_{-\alpha_i}\rangle \, T , \ 
  s=s_i, \  
  U_s\coloneqq U_+ \cap s U_+ s^{-1}.
  $$
In particular,
$|P_i:B|$ is finite for all $i\in I$
so that,
by Theorem~\ref{complete_2}, 
we can complete 
$G_\mD (\bF_q)$
with respect to the pro-$p$-topology on $B$
(or, in fact, any ``$M_s$-equivariant'' topology). 
The group $G^{lpp}$ has a BN-pair $(\widehat{B}, N)$
where $\widehat{B}=H\ltimes \widehat{U_+}$ and 
$\widehat{U_+}$ is the full pro-$p$ completion of $U$.

The {\em congruence subgroup}
$C(G^{lpp})=\cap_{g\in G^{lpp}} \widehat{U_+}^g$  
is of crucial interest. 
Suppose that $A$ is irreducible and the root datum $\mD$  is simply connected.
Let $Z'(G^{lpp})\coloneqq
Z((G_\mD (\bF_q))\times C(G^{lpp})$ (note that the intersection is trivial). 
\begin{thm} \cite{CR,Mar1,CER}
Under these conditions
$G^{lpp}/Z'(G^{lpp})$ is a topologically simple group.
Moreover, 
if $A$ is $2$-spherical, then
$G^{lpp}/Z'(G^{lpp})$ 
is an abstractly simple group.
\end{thm}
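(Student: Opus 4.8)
The plan is to derive both assertions from Tits' simplicity machinery for groups with a BN-pair, applied to the pair $(\widehat{B},N)$ on $G^{lpp}$, using the topological refinement for the first claim and a $2$-spherical propagation argument for the second. First I would assemble the three structural inputs. The irreducibility of $A$ is precisely the irreducibility (connectedness of the Coxeter diagram) of the Weyl group $(W,S)$ of the BN-pair. The Borel splits as $\widehat{B} = H \ltimes \widehat{U_+}$, so $\widehat{U_+}$ plays the role of the normal ``unipotent'' complement demanded by Tits' theorem. Finally I would identify the subgroup we quotient by with the kernel of the action on the building: one checks that $\cap_{g\in G^{lpp}} \widehat{B}^{\,g} = Z'$, where the discrete part $Z(G_\mD(\bF_q)) \subseteq H$ contributes the genuine centre and the non-discrete part is the congruence subgroup $C(G^{lpp}) = \cap_g \widehat{U_+}^{\,g}$; the simply-connected hypothesis is used here, since it makes $G_\mD(\bF_q)$ generated by the root subgroups \cite{CaCh} and thereby pins the centre inside $H$.

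For topological simplicity I would invoke the topological form of Tits' criterion \cite{CR,Mar1}. The hypotheses to verify are that $G^{lpp}$ is topologically generated by the conjugates of $\widehat{U_+}$, that its action on the building is strongly transitive with irreducible Weyl group, and that $G^{lpp}$ is topologically perfect (the latter following from the perfectness of $G_\mD(\bF_q)$ modulo its centre, cf. the simplicity statement cited in \S\ref{s1.2}, passing to the completion). Granting these, the standard Bruhat-decomposition argument shows that a \emph{closed} normal subgroup not contained in $\cap_g \widehat{B}^{\,g} = Z'$ must contain a conjugate of $\widehat{U_+}$, hence all of $G^{lpp}$; this yields topological simplicity of $G^{lpp}/Z'$.

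The abstract statement under $2$-sphericity is where the real work lies. Let $N$ be an \emph{abstract} normal subgroup of $G^{lpp}$ not contained in $Z'$. The combinatorial half of Tits' argument is insensitive to topology: strong transitivity together with the irreducibility of $(W,S)$ gives $N\widehat{B} = G^{lpp}$ and forces $N \cap \widehat{U_+} \neq 1$. The obstacle is to upgrade this to $\widehat{U_+} \subseteq N$, because topological simplicity alone tolerates dense proper normal subgroups of the profinite group $\widehat{U_+}$. This is exactly where $2$-sphericity enters: each rank-two subsystem being of finite type makes the commutator relations among the root subgroups $U_\alpha$ \emph{finitary}, so that conjugating a single nontrivial element of $N \cap \widehat{U_+}$ through finitely many root groups captures an entire $U_\alpha$, and then all of $\widehat{U_+}$, which under $2$-sphericity is generated by the $U_\alpha$ abstractly rather than merely topologically. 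Once $\widehat{U_+} \subseteq N$, normality gives $\langle \widehat{U_+}^{\,g} : g \in G^{lpp}\rangle \subseteq N$, whence $N = G^{lpp}$ and $G^{lpp}/Z'$ is abstractly simple \cite{CER}. I expect the finitary-propagation step, and the abstract (as opposed to topological) generation of $\widehat{U_+}$ by root groups, to be the main difficulty.
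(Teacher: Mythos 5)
The paper gives no proof of this theorem --- it is quoted from \cite{CR,Mar1,CER} --- so your proposal can only be measured against those sources. Your treatment of the topological half matches them in outline: identify $Z'(G^{lpp})$ with the kernel $\bigcap_{g}\widehat{B}^{\,g}$ of the action on the building, then run the topological version of Tits's simplicity criterion using strong transitivity, irreducibility of $(W,S)$, the pro-$p$ (hence pro-soluble) normal subgroup $\widehat{U_+}\trianglelefteq\widehat{B}$, and topological perfectness inherited from the dense subgroup $G_\mD(\bF_q)$. That part is sound in outline.

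The abstract half has a genuine gap. Your mechanism hinges on the claim that under $2$-sphericity $\widehat{U_+}$ is generated by the root subgroups $U_\alpha$ \emph{abstractly} rather than merely topologically. This is false: the abstract subgroup generated by the positive real root groups is the countable group $U_+$ of the minimal Kac--Moody group, whereas $\widehat{U_+}$ is its pro-$p$ completion, uncountable whenever $\mA$ is of infinite type; no finitariness of the rank-two commutator relations can close a cardinality gap. What $2$-sphericity (with $q$ not too small) actually provides is that $U_+$ is \emph{finitely generated}, so that $\widehat{U_+}$ is a topologically finitely generated pro-$p$ group with open Frattini subgroup. The argument of \cite{CER} then runs along different lines from yours: a non-central abstract normal subgroup $N$ is dense by topological simplicity, and a commutator (Frattini-type) argument exploiting the topological finite generation of the open pro-$p$ subgroup $\widehat{U_+}$ forces such a dense normal subgroup to capture a normally generating subgroup of $G^{lpp}$, whence $N$ is everything. (Marquis \cite{Mar1} later removed the $2$-sphericity hypothesis by an entirely different route, via contraction groups.) So you diagnosed the obstacle correctly --- topological simplicity tolerates dense proper normal subgroups --- but the step you propose to overcome it rests on a false premise; the missing idea is the finite-generation/Frattini argument, not abstract generation of $\widehat{U_+}$ by root groups.
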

  
\subsection{Comparison to other completions}\label{s2.2}
It is instructive to compare $G^{lpp}$
with other completions of $G_\mD (\bF_q)$, a.k.a
topological Kac-Moody groups (cf. \cite{CR, Marquis,Rou2,CER,RWe}).
Let us list them:
\begin{itemize}
\item
{\em the Caprace-R\'emy-Ronan group} $G^{crr}$, a completion in the topology
of the action on Bruhat-Tits building,
\item
  {\em the Carbone-Garland group} $G^{c\lambda}$, a completion in the topology
  of the action on the integrable simple module with a highest weight $\lambda$,
\item {\em the Mathieu-Rousseau group}  $G^{ma+}$,
  an analogue of the ind-algebraic completion,
also  obtained as an amalgam $\underset{\fB}{\ast}\;H$
  where $\fB=\{\widetilde{B},N,\widetilde{P_s}\}$
  with specially constructed groups $\widetilde{P_s}$.
\item another {\em Mathieu-Rousseau group}  $G^{+}$,
  the closure of $G$ in $G^{ma+}$,
\item {\em the Belyaev group}  $G^{b}$,
  the ``largest'' completion with 
  compact totally disconnected $\overline{U_+}$.
  \item {\em the Schlichting group}  $G^{s}$,
    the ``smallest'' completion with 
    compact totally disconnected $\overline{U_+}$.
\end{itemize}

If $p>\max_{i\neq j} (-A_{ij})$, then
${G}^+ = G^{ma+}$ but they could be different, in general
\cite[6.11]{Rou2}.
The precise meaning of the ``largest'' and the ``smallest'' of the last two groups
is a certain universal property (consult \cite{RWe} for precise statement).
The action on the Bruhat-Tits building ensures that $G^s=G^{crr}$.
Theorem~\ref{complete_2} gives $G^{b}$
by considering the profinite topology on $B$ instead
of the pro-$p$-topology:
its basis at $1$ is $\{ A\leq B\mid  |B:A|< \infty \}$.
The following theorem compares the known completions:
\begin{thm} \cite{CR, Rou2, RWe}
There are  open continuous surjective group homomorphisms:
$$
G^{b}\twoheadrightarrow
G^{lpp}\twoheadrightarrow
{G}^+
\twoheadrightarrow G^{c\lambda}\twoheadrightarrow G^{crr}
\xrightarrow{\cong} G^{s}
.$$
  \end{thm}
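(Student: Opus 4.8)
The plan is to present all six groups as completions of the single minimal group $G=G_\mD(\bF_q)$, each governed by a group topology on the positive unipotent subgroup $U_+$, and then to compare those topologies pairwise. By Theorem~\ref{complete_2}(c) (together with the analogous amalgam presentations of the Mathieu--Rousseau and building completions in \cite{Rou2,RWe}), every node in the chain is an amalgam $\underset{\fB}{\ast}H$ over $\fB=\{\overline{B},N,\overline{P_s};s\in S\}$ with the \emph{same} $N$ and the same Weyl data, the only varying ingredient being the completed Borel $\overline{B}=H\ltimes\overline{U_+}$. Consequently a comparison map between two completions reduces to a compatible comparison of the completed unipotent groups, which in turn is just the unique continuous extension of $\id_{U_+}$; such an extension exists precisely when the source topology on $U_+$ refines the target topology.

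First I would record an explicit neighbourhood basis at $1$ in $U_+$ for each node: for $G^{b}$ the finite-index subgroups (the profinite topology on $B$); for $G^{lpp}$ the subgroups of $p$-power index (the pro-$p$-topology); for $G^{ma+}$ and $G^{+}$ the Mathieu--Rousseau filtration built from the pro-nilpotent completion $\widehat{U_+}^{ma}$ taken over \emph{all} positive roots, with $\overline{U_+}$ in $G^{+}$ the closure of $U_+$ inside $\widehat{U_+}^{ma}$; for $G^{c\lambda}$ the stabilisers in $U_+$ of finite-dimensional subspaces of the integrable simple module $L(\lambda)$; and for $G^{crr}\cong G^{s}$ the pointwise stabilisers of balls in the Bruhat--Tits building. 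With these in hand the task becomes the single chain of refinements
$$\mT^{b}\supseteq\mT^{lpp}\supseteq\mT^{+}\supseteq\mT^{c\lambda}\supseteq\mT^{crr},$$
each inclusion being a containment of neighbourhood bases at $1$. The first inclusion is immediate, since $p$-power-index subgroups are finite-index; the identification $G^{crr}\cong G^{s}$ is exactly the building statement recorded in~\ref{s2.2}; and a finer topology always yields a larger separated completion that continuously surjects onto the completion attached to any coarser topology, so each refinement produces the asserted arrow.

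Surjectivity and openness I would extract from the compactness of the completed unipotent groups together with the amalgam presentation. Each $\overline{U_+}$ here is a profinite, indeed pro-$p$, group, so a comparison map $\overline{U_+}^{(1)}\to\overline{U_+}^{(2)}$ has compact, hence closed, image; since this image contains the dense subgroup $U_+$ it is onto, and being a continuous surjection of compact groups it is open. These maps assemble into compatible open continuous surjections $\overline{B}_1\twoheadrightarrow\overline{B}_2$ and $\overline{P_s}_1\twoheadrightarrow\overline{P_s}_2$ fixing $N$, so by the universal property of the amalgam in Theorem~\ref{complete_2}(c) they induce a homomorphism $\widehat{G}_1\to\widehat{G}_2$; it is surjective because its image contains a generating set, and open because, by Theorem~\ref{complete_2}(a), the topology of each $\widehat{G}$ is induced from that of its Borel.

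The main obstacle is the pair of middle comparisons $\mT^{lpp}\supseteq\mT^{+}\supseteq\mT^{c\lambda}$. Unlike the profinite-versus-pro-$p$ comparison, these require translating two very differently defined topologies into genuinely comparable filtrations of $U_+$: the Mathieu--Rousseau topology is defined through the pro-nilpotent completion using \emph{imaginary} roots, whereas the Carbone--Garland topology is defined representation-theoretically through vector stabilisers in $L(\lambda)$. Showing that the representation-theoretic neighbourhoods are coarser than the Mathieu--Rousseau ones amounts to bounding, in terms of the root-height filtration, how deeply into $U_+$ one must descend to fix a given finite-dimensional subspace of $L(\lambda)$; this weight-versus-height estimate, together with the hypothesis $p>\max_{i\neq j}(-A_{i,j})$ that forces $G^{+}=G^{ma+}$, is where the real work lies, and for the complete justification one should consult \cite{CR,Rou2,RWe}.
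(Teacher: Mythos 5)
The paper does not actually prove this theorem: it is quoted from \cite{CR,Rou2,RWe} without argument, so there is no in-paper proof to measure you against. Your architecture --- realise each group in the chain as a completion of $G_\mD(\bF_q)$ governed by a group topology on $U_+$ (equivalently on $B$), compare the topologies, and assemble the induced maps of completed Borels and minimal parabolics through the amalgam presentation of Theorem~\ref{complete_2}(c) --- is the right one, and is essentially how \cite{CR} and \cite{RWe} organise the comparison. Your surjectivity and openness arguments are sound once compactness of each $\overline{U_+}$ is granted; note, however, that your blanket claim that a finer topology ``always yields a larger separated completion that continuously surjects onto'' the coarser completion is false as stated (the induced map of completions has dense image, not automatically closed image), and it is exactly the compactness of $\overline{U_+}$, which you invoke only afterwards, that rescues the surjectivity.

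Two concrete gaps remain. First, the arrows $G^{lpp}\twoheadrightarrow G^{+}$, $G^{+}\twoheadrightarrow G^{c\lambda}$ and $G^{c\lambda}\twoheadrightarrow G^{crr}$ are precisely where the theorem has content, and your proposal does not establish the corresponding containments of topologies: that the Mathieu--Rousseau topology on $U_+$ is pro-$p$ (hence coarser than the pro-$p$ topology), that stabilisers of finite-dimensional subspaces of the highest-weight module are open for the Mathieu--Rousseau topology, and that fixators of balls in the Bruhat--Tits building contain such stabilisers --- this last comparison you assert without any justification at all. Deferring all of these to the references leaves a scheme rather than a proof. Second, you import the hypothesis $p>\max_{i\neq j}(-A_{i,j})$ in order to force $G^{+}=G^{ma+}$; this is neither assumed in the statement nor needed. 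The chain deliberately passes through $G^{+}$, the closure of $G$ in $G^{ma+}$, precisely so that every node is a completion of $G$: the map $G^{lpp}\twoheadrightarrow G^{+}$ is a statement about that closure and holds for all $p$, whereas the equality $G^{+}=G^{ma+}$ is the separate issue recorded in the paper just before the theorem.
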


\subsection{Davis Building}\label{s2.3}
Let $G^\star$ be one of the locally compact,
totally disconnected 
groups from Section~\ref{s2.2}.
It admits a BN-pair $(B^\star , N)$
with the same Weyl group
$(W,S)$ as the Kac-Moody algebra $\fg_\bC$. 
Consequently, $G^\star$ acts on two simplicial complexes:
the Bruhat-Tits building $\fB$ and {\em the Davis building} $\fD$
(also known as {the Davis realisation} \cite[Section 12.4]{AbBr}
or {the geometric realisation} \cite[Section 18.2]{Davis1}). 
Notice that there are variations in this definition:
the original Davis' definition produces a cell complex,
while $\fD$ (defined below) is a simplicial complex, a subdivision
of this cell complex.

While the building $\fB$ is well-known, it is still instructive
to recall its definition. Let $\mP (G^\star)$
be the set of all proper parabolic subgroups of $G^\star$.
A parabolic $P\in \mP (G^\star)$
is conjugate to precisely one of the standard parabolics
$P_J \coloneqq \langle B^\star, \dot{s}\rangle_{s\in J}$
where $J\subset S$, $\dot{s} \in G^\star$ is a lift
of the element $s\in W = N(T)/T$.
Thus, we can define {\em the type} and {\em the rank} of each parabolic by
$$
t(P) = J, \ r(P) = |J|
\ \mbox{ whenever } \ 
P \sim P_J.
$$

The building $\fB$ is an $n$-dimensional simplicial complex
($n=|S|$) 
whose set of $k$-dimensional
simplices $\fB_k$ is equal to $t^{-1} (n-k) = \{ P \,\mid\, r(P)= n-k \}$.
A simplex $P^\prime$ is a face of $P$ if and only if $P\subseteq P^\prime$.
The group $G^\star$ acts on $\fB$ in the obvious way: $\,^gP=gPg^{-1}$.
Since parabolic subgroups are self-normalising, the stabiliser of $P$ is $P$
itself. One drawback of this action is that stabilisers of simplices
are not necessarily compact. This drawback is fixed in the Davis building.

A subset $J \subset S$ is called \emph{spherical}
if the Coxeter subgroup $\langle J\rangle$ is finite.
If $\mA$ has no components of finite type, 
$\mP^{sp} (G^\star)$ is the subset of
$\mP (G^\star)$
that consists of parabolics of spherical type.
In general, $G^\star = G_1^\star \times \ldots \times G_k^\star$
with $G_j^\star$ corresponding to connected components of $\mA$. 
The elements of $\mP^{sp} (G^\star)$ are {\em marked parabolics}
$H\leq P$ where $P=P_1 \times \ldots \times P_k$ is a finite type parabolic,
$P_j\leq G_j$,
$H=H_1 \times \ldots \times H_k$
and each $H_j$ is either $\{1\}$ (if $P_j\neq G_j$),
or a Borel subgroup (if $P_j= G_j$).

The set $\mP^{sp} (G^\star)$ is partially ordered:
$(H'\leq P')\preceq (H\leq P)$ if and only if $H\subseteq P'\subseteq P$. 
The Davis building $\fD$ is 
the geometric realisation of the poset $\mP^{sp} (G^\star)$, i.e.,
its set $\fD_k$ of $k$-dimensional
simplices consists of $(k+1)$-long chains of marked spherical parabolics
$$
P_0 \prec P_1 \prec \; \ldots \; \prec P_{k-1} \prec P_k \; .
$$
Faces of a simplex are its subchains. 
The group $G^\star$ acts on $\fD$ in the same obvious way:
$g(H_i\leq P_i)=(gH_ig^{-1}\leq gP_ig^{-1})$.
The stabiliser of a chain $(P_i)$ is an open subgroup $P'_0$ of $P_0$.
Thus, all stabilisers are compact because
spherical parabolic subgroups  
are necessarily compact.

One interesting example is {\em a generic} Kac-Moody group.
Suppose $A_{i,j}A_{j,i}\geq 4$ for all $i$, $j$.
Then the only spherical subsets of $S$ are the empty set and one element
subsets. Consequently, any chain in $\mP^{sp} (G^\star)$
is of length at most 1 and
$\fD$ is a tree.

If $\mA$ has no irreducible components of finite type, both buildings
$\fB$ and $\fD$ are contractible. 
If $\mA$ has an irreducible component of finite type,
then
$\fD$ is still contractible, while $\fB$ is not
(see \cite{Davis1} for this as well as detailed study of $\fD$).
We finish this section by stating Davis' Theorem:
\begin{thm} \cite{Davis2}
  $\fD$ admits a locally Euclidean, $G^\star$-invariant metric
  that turns $\fD$ into 
  complete, CAT(0) geodesic space.
\end{thm}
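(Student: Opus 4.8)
The plan is to reduce the statement to the combinatorics of the Coxeter system $(W,S)$ and then feed it into the standard machinery for piecewise-Euclidean complexes of non-positive curvature. First I would recognise $\fD$ as a building whose apartments are copies of the Davis complex $\Sigma = \Sigma(W,S)$, the geometric realisation of the poset of spherical cosets $wW_J$ ($J$ spherical) in $W$; equivalently, $\Sigma$ is the $W$-orbit of the fundamental chamber $K$, itself the geometric realisation of the poset $\Sph$ of spherical subsets of $S$. Because every pair of simplices of $\fD$ lies in a common apartment, it suffices to equip $\Sigma$ with a $W$-invariant metric and then transport it $G^\star$-equivariantly to all of $\fD$ via the type function $t$, which is preserved by the $G^\star$-action. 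Since the whole construction is dictated by types, the resulting metric is automatically $G^\star$-invariant.

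To metrise $\Sigma$ I would use Moussong's piecewise-Euclidean structure: for each spherical $J\subseteq S$ the finite reflection group $W_J$ acts on a Euclidean space, and the convex hull of a generic $W_J$-orbit is a Coxeter cell; gluing these cells according to the poset $\Sph$ metrises $K$, and translating by $W$ produces a locally Euclidean, $W$-invariant metric on $\Sigma$ (see \cite{Davis1}). As $S$ is finite, there are only finitely many spherical subsets, hence only finitely many isometry types of cells in all of $\fD$. By Bridson's theorem, a connected piecewise-Euclidean complex assembled from finitely many isometry types of cells is automatically a complete geodesic metric space. This already settles the \emph{locally Euclidean}, \emph{complete}, and \emph{geodesic} clauses; only the CAT(0) property remains.

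For the CAT(0) conclusion the route is the Cartan--Hadamard theorem for geodesic spaces: a complete, simply connected, non-positively curved space is globally CAT(0). Simple connectivity is free, since $\fD$ is contractible as recalled above. Non-positive curvature is a local condition, and by Gromov's Link Condition it amounts to showing that the link of every vertex of $\fD$, with its induced piecewise-spherical metric, is CAT(1). Inside a single apartment these links are the Moussong complexes attached to the residues, and that they are CAT(1) is Moussong's Lemma: a piecewise-spherical metric flag complex all of whose edges have length at least $\pi/2$ is CAT(1), a hypothesis met here because every edge length is a Coxeter angle and so is at least $\pi/2$.

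The hard part will be establishing the link condition for the \emph{full} building rather than for a single apartment. In $\fD$ the vertex links are no longer Coxeter complexes but spherical buildings of the spherical residues, joined along the diagram with Moussong-type factors. One therefore needs, in addition, that a thick finite spherical building carries a natural piecewise-spherical metric in which it is CAT(1), and then that the join of such building factors with the Moussong factors again satisfies the metric-flag and ``edges $\geq \pi/2$'' hypotheses. Verifying this combined link condition is the technical heart of the argument \cite{Davis1, AbBr}; once it is in hand, Gromov's Link Condition together with Cartan--Hadamard closes the proof.
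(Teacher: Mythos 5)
The paper offers no proof of this statement: it is quoted verbatim from Davis \cite{Davis2} as an external result, so there is no in-paper argument to measure your attempt against. Your outline is a faithful reconstruction of the strategy of the cited source (see also \cite{Davis1} and \cite[Ch.~12]{AbBr}): the apartments of $\fD$ are copies of the Davis--Moussong complex $\Sigma(W,S)$ metrised by Coxeter cells; since $S$ is finite there are finitely many isometry types of cells, so Bridson's theorem gives a complete geodesic space; $G^\star$-invariance holds because $G^\star$ acts by type-preserving automorphisms of the poset $\mP^{sp}(G^\star)$ and the cell shapes depend only on type; and CAT(0) follows from the Cartan--Hadamard theorem plus Gromov's link condition, with simple connectivity supplied by the contractibility of $\fD$, which is available independently from \cite{Davis1}.

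Be aware, however, that what you have written is a plan rather than a proof. The step you yourself flag as ``the technical heart'' --- that the links of vertices of the \emph{thick} building, which are spherical joins of Davis realisations of finite thick spherical buildings with Moussong-type factors, are CAT(1) --- is exactly the new content of \cite{Davis2} beyond Moussong's thesis, and it is not dispatched by Moussong's Lemma alone: one still needs the ``metric flag'' verification for the nerve (equivalently, that a subset $T\subseteq S$ with positive definite cosine matrix is spherical), the fact that joins of CAT(1) spaces are CAT(1), and an induction on rank for spherical buildings typically carried out via distance-non-increasing retractions onto apartments. Since the paper itself delegates all of this to \cite{Davis2}, your sketch is an acceptable account of the intended argument, but it should not be mistaken for a self-contained proof.
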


\subsection{Projective Dimension of Smooth Representations}\label{s2.4}
We study representations 
of $G^\star$ over
a field $\bK$ of characteristic zero. 
A representation 
$(V,\rho)$ is called \textit{a smooth representation}
if  for all $v \in V$ there exists
a compact open subgroup $K_v\leq G^\star$ 
such that $\rho(g)v=v$ for all $g \in K_v$.
Equivalently, the action $G\times V\rightarrow V$ is required
to be continuous with respect to the discrete topology
on $V$ (and standard topologies in $G^\star$ and the product).

The category $\mM(H)$ of smooth representations
of a locally compact totally disconnected topological
group $H$ 
is abelian with enough projectives \cite{Bern,LL}.
In case of the group $G^\star$ we can say more
by examining its action on $\fD$:
\begin{thm} \cite{HR}
\label{PD_dim}
  Let $d$ be the dimension of $\fD$. Then
$$
\cohdim(\mM(G^\star)) \leq d.
$$
\end{thm}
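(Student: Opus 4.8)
The plan is to turn the geometry of the Davis building into homological algebra: I will use the $G^\star$-action on $\fD$ to build, for an arbitrary smooth representation $(V,\rho)$, a projective resolution in $\mM(G^\star)$ of length at most $d=\dim\fD$, which forces $\cohdim(\mM(G^\star))\le d$. Two features of $\fD$ from Section~\ref{s2.3} are essential. First, $\fD$ is contractible. Second, every simplex stabiliser is a \emph{compact} open subgroup of $G^\star$ (this is exactly where $\fD$ beats the Bruhat--Tits building $\fB$); moreover, since the simplices of $\fD$ are chains of marked parabolics and $G^\star$ acts by poset automorphisms, a stabilising element preserves the order of a chain and hence fixes each vertex of a simplex pointwise.

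First I would write the augmented simplicial chain complex of $\fD$ over $\bK$,
$$
0 \to C_d(\fD) \to \cdots \to C_1(\fD) \to C_0(\fD) \xrightarrow{\varepsilon} \bK \to 0 .
$$
Contractibility of $\fD$ makes this sequence exact, so it is a resolution of the trivial module $\bK$. Because stabilisers fix simplices pointwise there is no orientation twist, and decomposing along the (finitely many, as $S$ is finite) $G^\star$-orbits of $k$-simplices gives
$$
C_k(\fD) \cong \bigoplus_{[\sigma]} \ind_{G^\star_\sigma}^{G^\star} \bK ,
$$
a finite direct sum of compact inductions from the compact open stabilisers $G^\star_\sigma$. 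Now I invoke the hypothesis $\mathrm{char}\,\bK=0$: for a compact open subgroup $K$ the category $\mM(K)$ is semisimple (Haar averaging), so $\bK$ is projective in $\mM(K)$; and compact induction $\ind_K^{G^\star}$, being left adjoint to the exact restriction functor $\mathrm{Res}_K$, preserves projectives. Hence each $C_k(\fD)$ is projective in $\mM(G^\star)$.

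Next I would tensor the resolution over $\bK$ with $V$, equipped with the diagonal action, to obtain
$$
0 \to C_d(\fD)\otimes V \to \cdots \to C_0(\fD)\otimes V \to V \to 0 .
$$
Exactness survives $-\otimes_\bK V$ since we work over a field, so this is a resolution of $V$. The terms stay projective by the projection formula $\ind_K^{G^\star}(\bK)\otimes V \cong \ind_K^{G^\star}(\mathrm{Res}_K V)$: the module $\mathrm{Res}_K V$ is projective in $\mM(K)$ by semisimplicity, and its compact induction is therefore projective in $\mM(G^\star)$. We have thus produced a projective resolution of $V$ of length $\le d$, so $V$ has projective dimension $\le d$; as $V$ was arbitrary, $\cohdim(\mM(G^\star))\le d$.

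I expect the main difficulty to lie not in the geometry but in nailing down the smooth-representation-theoretic inputs cleanly: that compact induction is left adjoint to restriction and so preserves projectives, that the projection formula holds at the level of smooth modules, and that $\mM(K)$ is semisimple for compact open $K$ in characteristic zero --- all standard but to be cited carefully from \cite{Bern,LL}. One also has to verify the soft points that let the complex live in $\mM(G^\star)$: that the boundary maps and augmentation are $G^\star$-equivariant and that every module appearing is genuinely smooth. Granting these, the construction is essentially forced by contractibility together with the compactness of stabilisers.
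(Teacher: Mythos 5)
Your proof is correct and follows essentially the same route as the paper: the augmented chain complex of the contractible Davis building decomposes along $G^\star$-orbits into inductions from compact open simplex stabilisers, hence is a projective resolution of $\bK$, and tensoring with $V$ yields a projective resolution of length at most $d$. The only (immaterial) divergence is the final step, where you use the projection formula together with semisimplicity of $\mM(K)$ to see that $C_m\otimes V$ is projective, while the paper instead notes that $\hom(C_m\otimes V,\underline{\quad})\cong\hom(C_m,\hom_\bK(V,\underline{\quad}))$ is a composition of exact functors; both arguments are standard.
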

\begin{proof}
  Let $C_i = C_i (\fD, \bK)$ be the group of $\bK$-linear
  chains on $\fD$.
The chain complex 
$
\mathscr{C} = (C_{d}
\xrightarrow{\partial} C_{d-1}
\; \cdots \;
\xrightarrow{\partial} C_0)
$
is acyclic
since $\fD$ is contractible, i.e., all homology groups are trivial except
for $H_{0}(\mathscr{C})=\bK$.
This gives an  exact sequence
\begin{equation*}
  \label{ex_seq}
0 \rightarrow 
C_{d} \xrightarrow{\partial} C_{d-1}
\xrightarrow{\partial} \ \cdots
\xrightarrow{\partial} C_0 \rightarrow \bK \rightarrow 0
\end{equation*}
of smooth representations of $G^\star$
where $\bK$ is the trivial representation. 

Let $\sigma = ((P_i),\tau)$ be an oriented simplex in $\fD$.
Its stabiliser $\stab_{G^\star}(\sigma)$ is open:
it is either $P'_0$, or its subgroup of index 2,
depending on whether an element of $G^\star$
can reverse the orientation $\tau$ or not.
The one-dimensional space
$\bK [\sigma]$
is a smooth representation of $P'_0$.
Since $P'_0$ is compact, $\bK [\sigma]$
is projective in $\mM (P'_0)$. 
Since $P'_0$ is open, the algebraic induction 
$\bK G^\star \otimes_{\bK P'_0}$
is left adjoint to the restriction functor
$\mM(G^\star)\rightarrow\mM(P'_0)$.
Hence, 
$\bK G^\star \otimes_{\bK P'_0} \bK [\sigma]$
is a projective module in $\mM(G^\star)$. 
Observe that
$$
C_m \cong \oplus_{(P_i)} \bK G^\star \otimes_{\bK P'_0} \bK [((P_i),\tau)]
$$
where the sum is taken over representatives of $G^\star$-orbits
on $\fD_m$.
It follows that
$\mathscr{C}$
is a projective resolution of the trivial representation $\bK$.

Let $V$ be an object in $\mM (G^\star)$.
Tensor product of representations
$\otimes V$ is an exact functor
$\mM (G^\star)\rightarrow \mM (G^\star)$ so that
$$
0 \rightarrow
C_{d} \otimes V
\rightarrow C_{d-1} \otimes V
\rightarrow \cdots
\rightarrow C_0 \otimes V \rightarrow V \rightarrow 0
$$
is an exact sequence. We claim that it is a projective resolution of $V$.
Indeed, 
the functor $\sF=\hom (C_m, \underline{\quad}\ )$ is exact
since $C_m$ is projective. 
The functor
of all linear maps
$\sE=\hom_{\bK} (V, \underline{\quad}\ )$ is also exact.
The composition of two exact functors is exact, so
$\sF\sE=\hom (C_m \otimes V, \underline{\quad}\ )$
is exact
and $C_m \otimes V$ are projective objects.
\end{proof}

\subsection{Localisation}\label{s2.5}
One should put Theorem~\ref{PD_dim}
into a broader perspective of
Schneider-Stuhler Localisation \cite{HR,SS2}.
By localisation we 
understand an equivalence of two categories:
a representation theoretic category
($\mM(G^\star)$ for us)
is equivalent
to (``localised to'') a geometric category.
The key geometric category
is the category $\Ch$ of $G^\star$-equivariant cosheaves
on $\fD$.

{\em A $G^\star$-equivariant cosheaf},
a.k.a. a coefficient system for homology,
is a datum $\mC = (\mC_F, r^{F}_{F'}, g_F)$
where $\mC_F$ is a $\bK$-vector space for each face $F$ of $\fD$,
$r^{F}_{F'}: \mC_F \to \mC_{F'}$
is a linear map
for each pair of faces $F' \subseteq F$,
$g_F: \mC_F \to \mC_{gF}$ is a linear map
for all $g \in G^\star$ and a face $F$
that are subject to the following axioms:
\begin{itemize}
\item[(i)] $r^{F}_{F} = \id_F$ for every face $F$, 
\item[(ii)] $r^{F'}_{F''} \circ r^{F}_{F'} = r^{F}_{F''}$ for faces
  $F'' \subseteq F' \subseteq F$, 
\item[(iii)] $g_{hF} \circ h_F = (gh)_F$ for all $g, h$ and  $F$,
\item[(iv)] 
  $\mC_F$ is a smooth representation of the stabiliser $G^\star_F$
  for all $F$, 
\vspace{4pt}
\item[(v)]
The square 
$
\begin{CD}
\mC_F @>> g_F> \mC_{g F}\\
@VV{r^F_{F'}}V @VV{r^{gF}_{gF'}}V\\
\mC_{F'} @>{g_{F'}}>> \mC_{gF'}
\end{CD}
$ 
\hspace{14pt}
is commutative
for all $g$ and 
$F' \subseteq F$.
\end{itemize}

A morphism of equivariant cosheaves
$\psi : \mC \rightarrow \mE$ 
is a system of linear maps 
$\psi_F : \mC_F \rightarrow \mE_F$,
commuting with actions
and restrictions, i.e, 
the squares 
$$
\begin{CD}
\mC_F @>>\psi_F> \mE_{F}\\
@VV{r^F_{F'}}V @VV{r^F_{F'}}V\\
\mC_{F'} @>{\psi_{F'}}>> \mE_{F'}
\end{CD}
\hspace{34pt}
\mbox{ and }
\hspace{24pt}
\begin{CD}
\mC_F @>>\psi_F> \mE_{F}\\
@VV{g_F}V @VV{g_F}V\\
\mC_{g F} @>{\psi_{g F}}>> \mE_{g F}
\end{CD}
$$ 
are commutative
for all $g$ and
$F' \subseteq F$.

The category of equivariant cosheaves 
$\Ch$
is an abelian category \cite{SS2}: kernels and cokernels
can be computed simplexwise.
There are several functors connecting the key categories
$\mM(G^\star)$ and $\Ch$.
For instance, 
\emph{the trivial cosheaf functor}
$\mathscr{L}$
associates 
a cosheaf $\Vt\in\Ch$
to $(V,\rho)\in\mM (G^\star)$:
$$
\Vt_F=V, \ \
r^F_{F'} = \id_V, \ \
g_F = \rho (g).
$$
In the opposite direction, if $\mC$ 
is a $G^\star$-equivariant cosheaf,
the group $G^\star$ acts on 
the vector space of oriented $i$-chains
(with finite support) 
$C_i(\fD, \mC)$ with coefficients
in $\mC$.
In fact, $G^\star$ acts on the space of more general chains as well
but the finite support ensures that 
$C_i(\fD, \mC)\in \mM(G^\star)$, a functor in the opposite direction!
Furthermore, the chain complex
$$
\mathscr{C} (\mC):
0 \rightarrow C_d(\fD, \mC) \xrightarrow{\partial}
C_{d-1}(\fD, \mC) \xrightarrow{\partial} \cdots
\xrightarrow{\partial} C_0(\fD, \mC)
\rightarrow 0
$$
is a chain complex in $\mM (G^\star)$.
The functor $\mathscr{C}$ allows us to paraphrase Theorem~\ref{PD_dim}:
\begin{cor}
Given a smooth representation $V$, 
  the complex $\mathscr{C} (\Vt)$
  is a projective resolution of $V$.
  \end{cor}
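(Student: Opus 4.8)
The plan is to identify the complex $\mathscr{C}(\Vt)$ with the projective resolution that was already constructed inside the proof of Theorem~\ref{PD_dim}, and then invoke that theorem rather than redo the homological work.

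First I would unwind the definition of the chains $C_i(\fD,\Vt)$. Because the trivial cosheaf has $\Vt_F=V$ for \emph{every} face $F$, with all restriction maps equal to $\id_V$ and group action $g_F=\rho(g)$, the space of $i$-chains with coefficients in $\Vt$ is naturally isomorphic to $C_i(\fD,\bK)\otimes_\bK V$ as an object of $\mM(G^\star)$. Here $G^\star$ acts diagonally: by permuting oriented simplices on the first factor and through $\rho$ on the second. Concretely, the isomorphism sends an elementary chain $[\sigma]\otimes v$, for an oriented $i$-simplex $\sigma$ and $v\in V=\Vt_\sigma$, to the obvious generator of $C_i(\fD,\bK)\otimes V$.

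Next I would check that under this identification the boundary operator $\partial$ with coefficients in $\Vt$ becomes exactly $\partial\otimes\id_V$, where $\partial$ is the ordinary simplicial boundary on $C_\bullet(\fD,\bK)$. This is immediate, since the coefficient boundary applies the restriction maps $r^{\sigma}_{\sigma'}$ along codimension-one faces and these are all $\id_V$. Likewise the augmentation $C_0(\fD,\Vt)\to V$ matches the augmentation $C_0(\fD,\bK)\to\bK$ tensored with $\id_V$, under the canonical identification $\bK\otimes V=V$. Therefore $\mathscr{C}(\Vt)$ is isomorphic, as a complex in $\mM(G^\star)$, to the complex $(C_\bullet\otimes V,\ \partial\otimes\id_V)$ appearing in the proof of Theorem~\ref{PD_dim}. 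That proof shows precisely that this complex is a projective resolution of $V$: the base complex $\mathscr{C}=(C_\bullet,\partial)$ is a projective resolution of the trivial module $\bK$, and applying the exact functor $\otimes V$ preserves both exactness and projectivity of each term. Hence $\mathscr{C}(\Vt)$ is a projective resolution of $V$.

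The main obstacle is the bookkeeping in the first identification, specifically verifying $G^\star$-equivariance together with the orientation conventions. As noted in the proof of Theorem~\ref{PD_dim}, the stabiliser of an \emph{oriented} simplex is either $P'_0$ or an index-$2$ subgroup that reverses orientation, so one must carry the orientation module along and confirm that the diagonal action is compatible with the sign changes built into the oriented chain groups. Once these conventions are fixed, the identification of $\mathscr{C}(\Vt)$ with $C_\bullet\otimes V$ is a routine unwinding of the definitions, and the corollary follows at once from Theorem~\ref{PD_dim}.
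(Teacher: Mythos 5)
Your proposal is correct and is exactly the argument the paper intends: the corollary is explicitly offered as a paraphrase of Theorem~\ref{PD_dim}, and identifying $C_\bullet(\fD,\Vt)$ with $C_\bullet(\fD,\bK)\otimes V$ (with diagonal $G^\star$-action and boundary $\partial\otimes\id_V$) reduces it to the projective resolution already constructed in that theorem's proof. No genuinely different route is taken, and the orientation/equivariance bookkeeping you flag is indeed the only point requiring care.
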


Resolutions of the form $\mathscr{C} (\mC)$
are quite useful. The category $\Ch$ is {\em Noetherian}:
a subobject of a finitely-generated object
is finitely-generated.
Hence, a finitely-generated infinite-dimensional object $V$
admits 
a finitely-generated projective resolution,
yet $\sC (\Vt)$ is not finitely generated. 
We call a finitely-generated projective resolution
of the form $\mathscr{C} (\mC)$
{\em a Schneider-Stuhler resolution}. Do they exist (cf. Section~\ref{s3.8})?

It may be possible to construct them using systems of subgroups
or system of idempotents \cite{HR,MeSo}. In fact, 
\cite{HR} contains a positive answer to existence of Schneider-Stuhler
resolutions modulo a (yet open) conjecture on homology
of a CAT(0)-complex. To satisfy the reader's curiosity we state this conjecture
in full in Section~\ref{s3.9}.

Let us turn our attention to the localisation.
We have the trivial cosheaf and the 0-th homology functors
going between $\mM(G^\star)$ and $\Ch$:
$$
\sL((V,\rho)) = \Vt, \ \ 
\sH(\sC) = H_0 (\fD, \sC). 
$$
Let
$\Sigma \subset \mbox{Mor}(\Ch)$ 
be the class of morphisms $\psi$ such that $\sH(\psi)$ is an isomorphism.
Consider the category of left fractions
$\Ch[\Sigma^{-1}]$
and the fraction functor 
$\sQ_{\Sigma}: \Ch\rightarrow \Ch[\Sigma^{-1}]$.
Note that while these fractions always exist, 
$\Sigma$ 
needs to satisfy
{\em the left Ore condition}
(a.k.a. admit a calculus of left fractions)
for these objects to be malleable \cite{GZ}.
The 0-th cohomology functor extends
to
a functor from the category of left fractions
$
\sH[\Sigma^{-1}]: \Ch[\Sigma^{-1}] \rightarrow \mM(G^\star)
$. 
We are ready for  the localisation theorem,
a generalisation of Schneider-Stuhler Localisation \cite{SS2}:
\begin{thm} \cite{HR}
\label{local}
Under the notations established above, 
the following statements hold:
\begin{itemize}
\item[(i)]   The class $\Sigma$
  satisfies the left Ore condition.
\item[(ii)]   
The functor $\sH[\Sigma^{-1}]: \Ch[\Sigma^{-1}] \rightarrow \mM(G^\star)$
is an equivalence of categories. 
\item[(iii)] $\sQ_{\Sigma}\circ \sL$ is a quasi-inverse of
  $\sH[\Sigma^{-1}]$.
\end{itemize}
\end{thm}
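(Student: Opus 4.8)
The plan is to recognise Theorem~\ref{local} as an instance of the Gabriel--Zisman formalism for \emph{reflective} subcategories: I would first show that $\sL$ exhibits $\mM(G^\star)$ as a reflective subcategory of $\Ch$ with reflector $\sH$, after which all three assertions follow formally. The key input is an adjunction $\sH \dashv \sL$. Given a cosheaf $\sC$ and a smooth representation $(V,\rho)$, a morphism $\sC \to \Vt$ of equivariant cosheaves is, by the two commuting squares in the definition of a morphism, a family of $\bK$-linear maps $\psi_F : \mC_F \to V$ satisfying $\psi_F = \psi_{F'}\circ r^F_{F'}$ for $F' \subseteq F$ together with the equivariance $\rho(g)\circ\psi_F = \psi_{gF}\circ g_F$. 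Since each simplex is connected, such a family is freely and uniquely determined by its values on vertices subject to the edge relations $\psi_{v_0}\circ r^e_{v_0} = \psi_{v_1}\circ r^e_{v_1}$, and these relations say exactly that the vertex maps annihilate $\im(\partial\colon C_1 \to C_0)$. Hence
\[
\Hom_{\Ch}(\sC, \sL V) \;\cong\; \Hom_{\mM(G^\star)}\big(H_0(\fD,\sC), V\big) \;=\; \Hom_{\mM(G^\star)}(\sH\sC, V),
\]
naturally in $\sC$ and $V$, which is the desired adjunction. Along the way I must check that $\sH\sC = \coker(\partial)$ is genuinely smooth (it is a quotient of $C_0(\fD,\sC)$, a direct sum of compact inductions of the smooth stabiliser-modules $\mC_v$, hence smooth) and that the $G^\star$-action is respected throughout.

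Next I would show that the counit $\sH\sL \to \id_{\mM(G^\star)}$ is an isomorphism. For the constant cosheaf $\Vt$ all restriction maps are $\id_V$, so $\partial(x\otimes e) = x\otimes v_1 - x\otimes v_0$ and $H_0(\fD,\Vt) \cong V\otimes_\bK H_0(\fD,\bK)$. Contractibility of $\fD$ (recorded in Section~\ref{s2.3}) gives $H_0(\fD,\bK)=\bK$, and one reads off that the resulting isomorphism $\sH\sL(V)\cong V$ is $G^\star$-equivariant and natural. Since the counit of the adjunction $\sH\dashv\sL$ is invertible, the right adjoint $\sL$ is fully faithful, and it identifies $\mM(G^\star)$ with a reflective subcategory of $\Ch$ whose reflector is $\sH$ and whose class of reflector-inverted morphisms is precisely $\Sigma$.

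The three numbered statements are then formal consequences of this reflective structure \cite{GZ}. For (i), the triangle identity $\epsilon_{\sH\sC}\circ\sH(\eta_\sC)=\id$ together with invertibility of the counit $\epsilon$ shows every unit $\eta_\sC\colon \sC\to\sL\sH\sC$ lies in $\Sigma$; then, given $s\colon \sC\to\sC'$ in $\Sigma$ and $f\colon \sC\to\mE$, setting $t=\eta_\mE$ and $g = \sL\!\big(\sH f\circ(\sH s)^{-1}\big)\circ\eta_{\sC'}$ solves the left Ore square $gs=\eta_\mE f = tf$, using naturality of $\eta$; the remaining calculus-of-fractions axioms are verified the same way. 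For (ii) and (iii), since each $\eta_\sC\in\Sigma$ becomes invertible in $\Ch[\Sigma^{-1}]$, the composite $\sQ_\Sigma\circ\sL$ is a two-sided quasi-inverse of $\sH[\Sigma^{-1}]$: on one side $\sH[\Sigma^{-1}]\circ\sQ_\Sigma\circ\sL = \sH\sL\cong\id_{\mM(G^\star)}$, and on the other $\sQ_\Sigma(\eta_\sC)$ furnishes a natural isomorphism $\sQ_\Sigma\sC\cong\sQ_\Sigma\sL\sH\sC = (\sQ_\Sigma\circ\sL)\big(\sH[\Sigma^{-1}]\sQ_\Sigma\sC\big)$.

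I expect the main obstacle to be the first step: verifying the universal property of $H_0(\fD,\sC)$ \emph{inside} the smooth-representation category $\mM(G^\star)$ rather than in plain $\bK$-vector spaces. The delicate points are that one must work with finitely supported chains so that each $C_i(\fD,\sC)$ remains smooth, that the reconstruction $\psi_F=\psi_v\circ r^F_v$ of higher-simplex components from vertex data is independent of the chosen vertex $v\le F$ (which uses connectivity of each simplex), and that every construction is compatible with the $G^\star$-action and with orientations. Once the adjunction with fully faithful right adjoint $\sL$ and invertible counit is secured, parts (i)--(iii) are purely categorical.
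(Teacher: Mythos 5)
The paper states Theorem~\ref{local} without proof, merely citing \cite{HR}, so there is no in-paper argument to compare against; your proposal, however, reconstructs the standard proof correctly: establish the adjunction $\sH\dashv\sL$ with invertible counit (using connectedness of $\fD$, so $H_0(\fD,\bK)=\bK$), conclude that $\mM(G^\star)$ sits as a reflective subcategory of $\Ch$ with $\Sigma$ the class of morphisms inverted by the reflector, and apply the Gabriel--Zisman criterion \cite{GZ}. This is precisely the route taken in the cited source (and in Schneider--Stuhler's original setting), and your verifications of the Ore axioms via the unit $\eta$ and of the quasi-inverse are sound.
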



\section{Questions}
\subsection{Isomorphism Problem}
\label{s3.1}
Find necessary and sufficient conditions on realisations
$\mR$ and $\mS$ for
$\widehat{L}_\mR$
and
$\widehat{L}_\mS$
to be equivalent as functors to graded Lie algebras.

\subsection{Existence of Restricted Structure}
\label{s3.2}
Suppose $\bF$ is a field of positive characteristic.
Find necessary and sufficient conditions on realisation for
$\widehat{L}_\mR (\bF)$ to admit a structure of restricted Lie algebra.
In particular, 
if $\mA$ is a generalised Cartan matrix of general type
and $p \leq \max_{i\neq j} (-A_{i,j})$,
could $\widehat{L}_\mD (\bF)$ be restricted?

\subsection{Humphreys-Verma Conjecture}
\label{s3.3}
Consider ``natural'' restricted $\fg_\bF$-modules,
e.g., irreducible, projective, injective.
Do they admit an action of $G_\mD (\bF)$
such that for each real root $\alpha$
the differential of the $U_\alpha$-action
is the $\fg_{\bF \; \alpha}$-action?

\subsection{Theory of Over-restricted Representations}
\label{s3.4}
Investigate algebraic
properties of the over-restricted enveloping algebra
$U (\fg_\bF) /(x^p-x^{[p]},e_\alpha^{\lfloor (p+1)/2\rfloor})$
and its representations. 



\subsection{Congruence Kernel}
\label{s3.5}
Develop techniques for computing $C(G^{lpp})$.
Find necessary and sufficient conditions for $C(G^{lpp})$
to be trivial (central, finitely pro-$p$-generated, etc.).

\subsection{Lattices in Locally Pro-$p$-complete Kac-Moody Groups}
\label{s3.6}
Find minimal covolume of lattices (uniform and overall)
in $G^{lpp}$.

\subsection{Completions}
\label{s3.7}
Investigate the completions.
Find necessary and sufficient conditions the following completions to be equal
$G^{b}\stackrel{?}{=} G^{lpp}$,
$G^{+}\stackrel{?}{=} G^{ma+}$,
$G^{c\lambda}\stackrel{?}{=} G^{crr}$.

\subsection{Schneider-Stuhler Resolution}
\label{s3.8}
Does a Schneider-Stuhler resolution exist for any finitely-generated
object
$V\in \mM(G^\star)$?
What about irreducible objects? 
More precisely, does there exist a family of functors
$\mathscr{T}_k: \mM (G^\star)\rightarrow \Ch$, indexed by natural numbers,
such that for each irreducible $L\in\mM(G^\star)$
there exists $N\in\bN$ such that
$\mathscr{C} (\mathscr{T}_k (L))$
is a Schneider-Stuhler resolution of $L$ for all $k>N$.

\subsection{Homology of CAT(0)-Complex}
\label{s3.9}
Let $\fX$ be a CAT(0)-simplicial complex, $A$ an abelian group.
Suppose we have an idempotent operator $\Lambda_x :A \rightarrow A$
for each vertex $x$ of $\fX$. We call this system of idempotents
{\em geodesic} if the following conditions hold:
\begin{itemize}
\item[(i)] $\Lambda_x \Lambda_y = \Lambda_y \Lambda_x$ if $x$ and $y$ are adjacent,
\item[(ii)] $\Lambda_x \Lambda_z \Lambda_y = \Lambda_x \Lambda_y$
  and
  $\Lambda_x \Lambda_z  = \Lambda_z \Lambda_x$
  if 
  $z$ is any vertex of the first simplex
  along the geodesic $[x,y]$ for all vertices $x$ and $y$.
  \end{itemize}
Such geodesic system gives a cosheaf
$\Al$ where $\Al_F$ is the image of the product $\prod_x \Lambda_x$
taken over all faces of $F$ and $r^F_{F'}$ are natural inclusions.

Is it true that $H_{m}(\mX, \Al)=0$ for all $m>0$?

A positive answer to this question for Bruhat-Tits buildings can be
obtained by the methods of Meyer and Solleveld \cite{MeSo}.
%

\end{document}